\providecommand{\U}[1]{\protect\rule{.1in}{.1in}}
\providecommand{\U}[1]{\protect\rule{.1in}{.1in}}
\newcommand{\ov}{\overline}
\newcommand{\ro}{r_0}
\newcommand{\Cl}{\int_0^t}
\newcommand{\R}{{\mathbb R}}
\newcommand{\N}{{\mathbb N}}
\newcommand{\Rp}{{\mathbb R^+}}
\newcommand{\Rd}{{\mathbb R^d}}
\newcommand{\D}{\mathbb{D}(\mathbb{R}^{+},\mathbb{R}^{d})}
\newcommand{\DD}{\mathbb{D}(\mathbb{R}^{+},\mathbb{R}^{2d})}
\newcommand{\DDD}{\mathbb{D}(\mathbb{R}^{+},\mathbb{R}^{3d})}
\newcommand{\Dii}{{\mathbb D}\,(\Rp,\R^{2d})}
\newcommand{\Diii}{{\mathbb D}\,(\Rp,\R^{3d})}
\newcommand{\Diiii}{{\mathbb D}\,(\Rp,\R^{4d})}
\newcommand{\filtwh}{(\wh{\cal F}_t)}
\newcommand{\spawh}{(\wh \Omega,\,\wh{\cal F},\,(\wh{\cal F}_t),\,\wh P)}
\newcommand{\p}{{ P}}
\newcommand{\wh}{\widehat}
\newcommand{\arrowd}{\mathop{\longrightarrow}_{\cal D}}
\newcommand{\arrowp}{\mathop{\longrightarrow}_{\cal P}}
\newcommand{\cdw}{{\cal C}^2}
\newcommand{\lra}{\longrightarrow}
\newtheorem{theorem}{Theorem}[section]
\newtheorem{corollary}{Corollary}[section]
\newtheorem{definition}{Definition}[section]
\newtheorem{lemma}{Lemma}[section]
\newtheorem{proposition}{Proposition}[section]
\newtheorem{remark}{Remark}[section]
\newenvironment{proof}[1][Proof]{\noindent\textbf{#1.} }{\ \rule{0.5em}{0.5em}}
\renewcommand{\thefootnote}{\fnsymbol{footnote}}
\numberwithin{equation}{section}
\begin{document}

\title{Multivalued Monotone Stochastic Differential Equations\\ with Jumps}
\author{Lucian Maticiuc$^{a}$, Aurel R\u{a}\c{s}canu$^{a}$ and  Leszek S\l %
omi\'{n}ski$^{b,}$\thanks{
{\scriptsize Corresponding author.}} $\;$ \\
{\small $^{a}$ Faculty of Mathematics, \textquotedblleft Alexandru Ioan
Cuza\textquotedblright\ University, Carol 1 Blvd., no. 11, Ia\c{s}i, Romania,%
}\smallskip\\
{\small $^{b}$ Faculty of Mathematics and Computer Science, Nicolaus
Copernicus University},\\
{\small ul. Chopina 12/18, 87-100 Toru\'n, Poland} }
\date{}
\maketitle

\begin{abstract}
We study  multivalued stochastic differential equations (MSDEs)
with  maximal monotone operators driven by semimartingales with
jumps. We discuss in detail some  methods of approximation of
solutions of MSDEs based on discretization of processes and Yosida
approximation of the monotone operator. We also study the general
problem of stability  of solutions of MSDEs with respect to the
convergence of driving semimartingales.
\end{abstract}


\noindent\textbf{AMS Classification subjects}: Primary: 60H20; Secondary: 34A60.$%
\smallskip$

\noindent\textbf{Keywords or phrases}: Multivalued Stochastic
Differential Equations with Jumps; Maximal Monotone Operators;
Yosida Approximations.

\renewcommand{\thefootnote}{\fnsymbol{footnote}}
\footnotetext{\textit{\scriptsize E-mail addresses:} {\scriptsize %
lucian.maticiuc@ymail.com (Lucian Maticiuc), aurel.rascanu@uaic.ro (Aurel R%
\u{a}\c{s}canu), leszeks@mat.umk.pl (Leszek S\l omi\'{n}ski)}}

\section{Introduction}
Let $A:\mathbb{R}^{d}\rightarrow2^{\mathbb{R}^{d}}$ be a maximal monotone
multivalued operator on $\mathbb{R}^{d}$ with the domain $\mathrm{D}%
(A)=\{z\in\mathbb{R}^{d}:A(z)\neq\emptyset\}$ and its graph
\begin{equation*}
\mathrm{Gr}(A)=\{(z,y)\in\mathbb{R}^{2d}:z\in\mathbb{R}^{d},y\in A(z)\}.
\end{equation*}
Let $\Pi:\mathbb{R}^{d}\rightarrow\overline{\mathrm{D}(A)}$ be a \textit{%
generalized projection} on $\overline{\mathrm{D}(A)}$ (in the sense that $%
\Pi\left( x\right) =x$ for all $x\in\overline{\mathrm{D}(A)}$ and
$\Pi$ is a non--expansive map). In the paper we consider the
following $d$--dimensional MSDE driven by the operator $A$ and
associated with the projection $\Pi$:
\begin{equation}
X_{t}+K_{t}=H_{t}+\int_{0}^{t}\langle f(X_{s-}),dZ_{s}\rangle,\quad t\in%
\mathbb{R}^{+},   \label{eq1.1}
\end{equation}
where $Z$ is a $d$-dimensional semimartingale with $Z_{0}=0$, $H$ is a c%
\`{a}dl\`{a}g adapted process with $H_{0}\in\overline{\mathrm{D}(A)}=\mathrm{%
D}(A)\cup\mathrm{Bd}\left( \mathrm{D}(A)\right) $ and $f:\mathbb{R}%
^{d}\rightarrow\mathbb{R}^{d}\otimes\mathbb{R}^{d}$ is a
continuous function.
By a solution of (\ref{eq1.1}) we understand a pair $(X,K)$ of c\`{a}dl\`{a}%
g adapted processes such that $X_{t}\in\overline{\mathrm{D}(A)}$ for $%
t\in\mathbb{R}^{+}$, $K$ is a locally bounded variation process
such that $K_{0}=0$ and for any $(\alpha,\beta)\in\mathrm{Gr}(A)$,%
\begin{equation*}
\int_{s}^{t}\langle X_{u}-\alpha,dK_{u}^{c}-\beta\,du\rangle\geq0,\quad0\leq
s<t,\quad s,t\in\mathbb{R}^{+},
\end{equation*}
where $K_{t}^{c}:=K_{t}-\sum_{s\leq t}\Delta K_{s}$ and if
$|\Delta K_{t}|>0$ then
\begin{equation}
\label{eq1.2}
X_{t}=\Pi(X_{t-}+\Delta H_{t}+\langle f(X_{t-}),\Delta
Z_{t}\rangle ),\quad t\in \mathbb{R}^{+}
\end{equation}
(for the precise definition see Section 2).

Particular cases of the above type of MSDEs were considered earlier in many
papers. For instance, the existence and uniqueness of solutions of (\ref%
{eq1.1}) in the case of It\^{o} diffusions was proved independently in E. C%
\'{e}pa \cite{ce/98} and A. R\u{a}\c{s}canu \cite{ra/96} (in the
infinite dimensional framework). MSDEs with subdifferential
operator (i.e. with   maximal monotone operator of the form
$A=\partial\varphi$, where $\varphi$ is a proper convex and lower
semicontinuous function; see Remark \ref{rem2.1}(a) in the next
section) were studied  by A. R\u{a}\c{s}canu in \cite{ra/81}. More
recently, R Buckdahn et al. \cite{bu-ma-pa-ra/13}  extended the
results of  \cite{ra/81} to  non-convex setup by proving the
existence and uniqueness results both for the Skorokhod problem
and for the associated MSDE driven by the Fr\'{e}chet
subdifferential $\partial^{-}\varphi$ of a semiconvex function
$\varphi$.

In the case of It\^{o} diffusions, conditions ensuring existence,
uniqueness and convergence of approximation schemes  were given in
I. Asiminoaei, A. R\u{a}\c{s}canu \cite{as-ra/97}, V. Barbu, A.
R\u{a}\c{s}canu \cite{ba-ra/97}, A. Bensoussan, A. R\u{a}\c{s}canu
\cite{be-ra/97} and R. Pettersson \cite{pe/00}. SDEs with
subdifferential operator driven by general continuous
semimartingale were considered in A. Storm \cite{st/95}. The case
of diffusions with Poissonian jumps was considered by  C. Marois
\cite{ma} and quite recently by J. Wu \cite{wu} and A. Zalinescu
\cite{za}. They have imposed, however, a very restrictive
condition on the Poissonian measure coefficient, which forces that
$K$ is a process with continuous trajectories. As a result, in
proofs they can apply the  methods developed earlier for MSDEs
with continuous trajectories.

It is well known that for every nonempty closed convex set $D\subset\mathbb{%
R}^{d}$ its indicator function $\varphi={I}_{D}$ is a convex and
proper lower semicontinuous function (see Remark \ref{rem2.1}(b)).
This implies that equations (\ref{eq1.1}) are strongly related to
stochastic differential equations (SDEs) with reflecting boundary
condition in convex domains. Such type of equations were
introduced by A.V. Skorokhod \cite{sk,ss} in
one-dimensional case and $D=\mathbb{R}^{+}$. The case of
reflecting It\^{o} diffusions in convex domains $D$ was studied in
detail by T. Tanaka \cite{ta/79} and for
general, not necessary convex domains, by P-L. Lions, A.S. Sznitman \cite%
{li-sn/83}, A. Rozkosz \cite{Ro} and Y. Saisho \cite{sa/87}. W. \L aukajtys
\cite{la/04}  and L.
S\l omi\'{n}ski \cite{sl/93,sl/01}  considered SDEs with reflecting boundary conditions
in convex domain driven by a general semimartingale. Approximations of
solutions of SDEs with reflecting boundary condition were studied in D. L%
\'{e}pingle \cite{le/95}, J.L. Menaldi \cite{me/83}, R. Pettersson \cite%
{pe/95}, M. Bossy, E. Gobet, D. Talay \cite{bgt}, M. Bossy, M. Ciss\'e, D. Talay \cite{bct}, W. \L aukajtys, L. S\l omi%
\'{n}ski \cite{la-sl/03,la-sl/13}  and L. S\l omi\'{n}ski \cite{sl/94,sl/01}.  It is worth noting that in all
the papers devoted to reflecting
SDEs in convex domains the projection used is the classical one, i.e. if $%
|\Delta K_{t}|>0$ then (\ref{eq1.2}) is satisfied with $\Pi$
replaced by the classical classical projection on
$\overline{\mathrm{D}(A)}$, i.e.
$x=\Pi_{\overline{\mathrm{D}(A)}}(z)$ iff
$|z-x|=\inf\{|z-x^{\prime}|:x^{\prime}\in\overline{\mathrm{D}(A)}\}$.

In the present paper we study the existence, uniqueness,
approximations and stability of solutions of (\ref{eq1.1}) driven
by semimartingales with jumps. We assume that $A$ is a general
maximal monotone operator such that $\mathrm{Int}\left(
\mathrm{D}(A)\right) \neq\emptyset$ and  that the projection $\Pi$
is non--expansive. Since we consider generalized projections, our
results are new even in the case of SDEs with reflecting boundary
condition in convex domains.

The paper is organized as follows.  In  Section 2 we consider the
deterministic  Skorokhod problem with maximal monotone operator
and non-expansive projection. This problem was discussed  in
detail in the recent paper by  L. Maticiuc et al.
\cite{ma-ra-sl/13} (see Remark \ref{rem2.6}).  We refine slightly
compactness results from \cite{ma-ra-sl/13}.  With the use of the
so-called $\eta$-oscillations of real functions, we give new
estimates for solutions of the Skorokhod problem and then  we
apply them to prove new compactness criterion for these solutions
in $S$-topology introduced by Jakubowski \cite{ja} ($S$-topology
is weaker than the Skorokhod topology $J_1$).

Section 3 is devoted to the study  of strong solutions of
(\ref{eq1.1}). We prove the existence and uniqueness of a strong
solution  to (\ref{eq1.1}) provided that $f$ satisfies the linear
growth condition and is locally Lipschitz continuous. We propose
two practical schemes  of approximations  of (\ref{eq1.1}). The
first one is based on discrete approximations of processes $H$ and
$Z$ and is constructed with the analogy to the Euler scheme. We
prove its convergence in probability in the Skorokhod topology
$J_{1}$. The second scheme has the form
\begin{equation}
X_{t}^{n}+\int_{0}^{t}A_{n}(X_{s}^{n})ds=H_{t}+\int_{0}^{t}\langle
f(X_{s-}^{n}),dZ_{s}\rangle ,\quad t\in \mathbb{R}^{+},%
 \label{eq1.3}
\end{equation}%
where $A_n$, $n\in \mathbb{N}$, is the  Yosida approximation of
the operator $A$. We prove that for any stopping time $\tau $ such
that $\mathbb{P}(\tau <+\infty )=1$ and $\mathbb{P}(\Delta H_{\tau
}=\Delta Z_{\tau }=0)=1$, $X_{\tau }^{n}\xrightarrow[%
\mathcal{P}]{\;\;\;\;\;}X_{\tau }$ in probability, where $X$ is a
solution of (\ref{eq1.1}) associated with the maximal monotone
operator $A$ and the classical projection
$\Pi_{\overline{\mathrm{D}(A)}}$ ($X^{n}$ need not converge in
probability in the Skorokhod topology $J_{1}$). We also show that
a slightly modified Yosida type approximation converges to
solutions of (\ref{eq1.1}) with general non--expanding projection
$\Pi $.

In Section 4 we study the general problem of stability of
solutions (\ref{eq1.1}) with respect to the convergence of driving
semimartingales. Using new estimates from Section 2, we prove
stability results under the assumption that the sequence of driving
semimartingales satisfies the so-called condition (UT) introduced
by Stricker \cite{st} (see also \cite{ja-me-pa/89}). As a consequence,  we
show the existence of a weak solution of (\ref{eq1.1}) provided
that $f$ is continuous and satisfies the linear growth condition.

In the paper we consider the space $\mathbb{D}\left(\mathbb{R}%
^{+},\mathbb{R}^{d}\right) $  of all mappings $y:\mathbb{R}%
^{+}\rightarrow\mathbb{R}^{d}$ which are c\`{a}dl\`{a}g (right
continuous and admit left-hand limits) equipped with two different
topologies: the Skorokhod topology $J_{1}$ (for the definition and
many useful results on  $J_1$ topology see, e.g., S. Ethier and T.
Kurtz \cite{et-ku/86}  and J. Jacod and A. Shiryaev
\cite{ja-sh/87})  and the $S$-topology (see Jakubowski \cite{ja}).

For $x \in \D$, $\delta >0$, $T\in \Rp$ we denote by
$\omega'_x(\delta , q)$ the classical modulus  of continuity of
$x$ on $[0,T]$, i.e. $\omega'_x(\delta , T)=\inf \{ \max_{i \leq
r} \omega _x([t_{i-1},t_i)): 0=t_0 < \ldots < t_r =T$, $\inf_{i <
r}(t_i - t_{i-1}) \geq \delta\}$, where $\omega _x(I)=\sup_{s,t
\in I}|x_s-x_t|$. We set  $\left\Vert x\right\Vert _{\left[
s,t\right] }={\sup\limits_{r\in \left[ s,t\right] }|x_{r}|}$ and
$\left\Vert x\right\Vert _{t}=\left\Vert x\right\Vert _{\left[
0,t\right] }.$ Let $k:\left[ 0,T\right] \rightarrow\mathbb{R}^{d}$
and let $\mathcal{D}$ be the set of  partitions of the interval
$\left[ 0,T\right] $. For
$\Delta=\{0=t_{0}<t_{1}<\cdots<t_{n}=T\}$ we set $
V_{\Delta}(k)=\sum\limits_{i=0}^{n-1}|k(t_{i+1})-k(t_{i})|$ and
$\left\updownarrow k\right\updownarrow _{T}=\sup\limits_{\Delta\in\mathcal{D}%
}V_{\Delta}(k)$. Write $\mathrm{BV}(\left[ 0,T\right]
;\mathbb{R}^{d})=\{k:\left[ 0,T\right]
\rightarrow\mathbb{R}^{d}:\left\updownarrow k\right\updownarrow
_{T}<\infty\}$. We will say that $k\in\mathrm{BV}_{loc}(\mathbb{R}%
^{+};\mathbb{R}^{d})$ if, for every $T>0$, $k\in\mathrm{BV}(\left[
0,T\right] ;\mathbb{R}^{d})$. If $k$ is a function with locally
bounded variation then $ k_{t}^{c}=k_{t}-\sum_{s\leq t}\Delta
k_{s}$, $k_{t}^{d}=k_{t}-k_{t}^{c}$, $t\in\mathbb{R}^{+}$, and
$\left\updownarrow k\right\updownarrow _{\left[s,t\right]}$ stands
for its variation on $[s,t]$,  i.e.  $\updownarrow {k}
\updownarrow_{(t,T]}=\updownarrow {k}
\updownarrow_{T}-\updownarrow {k} \updownarrow_{t}$ with the
convention that $\updownarrow {k} \updownarrow_{0}=0$.

Let $Y=\{Y_{t}\}_{t\geq0}$ be an $(\mathcal{F}_{t})$-adapted process
and $\tau$ be an $%
(\mathcal{F}_{t})$-stopping time. We write $Y^{\tau}$ and $Y^{\tau-}$ to
denote the stopped processes $Y_{\cdot\wedge{\tau}}$ and $Y_{\cdot\wedge{%
\tau-}}$, respectively. Given a semimartingale $Y$ we denote by
$[Y]$ its quadratic variation process and by $\langle Y\rangle$
the predictable compensator of $[Y]$.  By ${\displaystyle
\arrowd}$ and ${\displaystyle\arrowp}$ we denote the convergence
in law and in probability, respectively.

\section{Preliminaries. The Skorokhod problem}

A set--valued operator $A$ on ${{\mathbb{R}^{d}}}$ is said to be monotone if%
\begin{equation*}
\langle y-y^{\prime},z-z^{\prime}\rangle\geq0,\quad\forall~(z,y),(z^{\prime
},y^{\prime})\in\mathrm{Gr}(A)
\end{equation*}
and $A$ is said to be maximal monotone if the condition $\langle
y-v,z-u\rangle\geq0$, $\forall~(u,v)\in\mathrm{Gr}(A)$ implies that $(z,y)\in%
\mathrm{Gr}(A)$.

\begin{remark}[see \protect\cite{br/73}]
{\rm \label{rem2.1}

(a) Let $\varphi:\mathbb{R}^{d}\rightarrow \mathbb{R}%
\cup\{+\infty\}$ be a proper convex and lower semicontinuous function. The
subdifferential operator of $\varphi$ is defined by%
\begin{equation*}
\partial\varphi(z):=\{y\in{{\mathbb{R}^{d}}}:\langle
y,z^{\prime}-z\rangle+\varphi(z)\leq\varphi(z^{\prime}),\,\forall~z^{\prime}%
\in{{\mathbb{R}^{d}}}\},\quad z\in{{\mathbb{R}^{d}}}
\end{equation*}
and it is a maximal monotone operator on $\mathbb{R}^{d}$.

(b) Let $D$ be a closed convex nonempty subset of $\mathbb{R}^{d}$
and let $I_{D}:\mathbb{R}^{d}\rightarrow\mathbb{R}\cup\{+\infty\}$ be the
convexity indicator function (i.e. $I_{D}\left( z\right) =0$ if $z\in D$ and
$+\infty$, otherwise). Then $I_{D}$ is convex lower semicontinuous and
proper. Moreover, $\partial I_{D}(z)=\emptyset$ if $z\notin D$ and%
\begin{equation*}
\partial{I}_{D}(z)=\{\langle y,x-z\rangle\leq0,\,x\in D\},\quad z\in D,
\end{equation*}
which implies that%
\begin{equation*}
\partial I_{D}(z)=\left\{
\begin{array}{ll}
\{0\}, & \text{if }z\in\mathrm{Int}(D),\medskip \\
N_{D}(z), & \text{if }z\in\mathrm{Bd}\left( D\right) .%
\end{array}
\right.
\end{equation*}
Here $N_{D}(z)$ denotes the closed external cone normal to $D$ at $z\in%
\mathrm{Bd}\left( D\right) $.
}
\end{remark}

In the paper we will restrict our attention to operators $A$ and
projections $\Pi$ satisfying the following hypotheses
\begin{description}
\item[(H1)] $A$ is  a maximal monotone operators $A$ such
that
\begin{equation}
\mathrm{Int}\left( \mathrm{D}(A)\right) \neq\emptyset,
\label{eq2.1}
\end{equation}
\item[(H2)]$\Pi:{{\mathbb{R}^{d}}}\rightarrow\overline{\mathrm{D}(A)}$
is a generalized projection such that
\begin{equation}
\left\{
\begin{array}{l}
\Pi(z)=z,~\forall z\in\overline{\mathrm{D}(A)},\medskip \\
|\Pi(z)-\Pi(z^{\prime})|\leq|z-z^{\prime}|,\quad\forall z,z^{\prime}\in{{%
\mathbb{R}^{d}}}.%
\end{array}
\right.   \label{eq2.2}
\end{equation}
\end{description}

It is well known that $\overline{\mathrm{D}(A)}$ is convex (see, e.g., \cite%
{br/73}). Let $\Pi_{\overline{\mathrm{D}(A)}}$ denote the
classical
projection on $\overline{\mathrm{D}(A)}$ with the convention that
$\Pi_{\overline {%
\mathrm{D}(A)}}(z)=z$, $\forall z\in\overline{\mathrm{D}(A)}$.
One can check
that%
\begin{equation*}
x=\Pi_{\overline{\mathrm{D}(A)}}(z)\Leftrightarrow\langle z-x,x^{\prime
}-x)\rangle\leq0,~\forall x^{\prime}\in\overline{\mathrm{D}(A)}
\end{equation*}
and%
\begin{equation}
|\Pi_{\overline{\mathrm{D}(A)}}(z)-\Pi_{\overline{\mathrm{D}(A)}}(z^{\prime
})|^{2}\leq\langle\Pi_{\overline{D(A)}}(z)-\Pi_{\overline{\mathrm{D}(A)}%
}(z^{\prime}),z-z^{\prime}\rangle,\quad\forall z,z^{\prime}\in{{\mathbb{R}%
^{d}}},   \label{eq2.3}
\end{equation}
which implies (\ref{eq2.2}).
There exist other important examples of  non-expanding projections on $%
\overline{\mathrm{D}(A)}$ associated with the \textit{elasticity
condition} (introduced in the one-dimensional case in
\cite{ch-ka/80} and \cite{sl-wo/10}): let $c\in\lbrack0,1]$ and
let $\Pi^{c,n}:{{\mathbb{R}^{d}}}\rightarrow{{\mathbb{R}^{d}}}$ be
of the form $ \Pi^{c,n}(z)=\Pi_{n}\circ...\circ\Pi_{1}(z)$,
$z\in{{\mathbb{R}^{d}}}$, where
$\Pi_{1}=\ldots=\Pi_{n}=\Pi^{c},\;n\in{\mathbb{N}}$ and
$\Pi^{c}:{{\mathbb{R}^{d}}}\rightarrow{{\mathbb{R}%
^{d}}}$ be given by %
\begin{equation*}
\Pi^{c}(z):=\Pi_{\overline{\mathrm{D}(A)}}(z)-c\big(z-\Pi_{\overline {%
\mathrm{D}(A)}}(z)\big),\quad z\in{{\mathbb{R}^{d}}}.
\end{equation*}
It can be shown (see \cite[Proposition 11]{ma-ra-sl/13}) that
there exists the limit
$\bar\Pi(z)=\lim_{n\rightarrow\infty}\Pi^{c,n}(z)$
and $\bar\Pi(z)$ is a generalized projection satisfying (\ref{eq2.2})).$%
\smallskip$

\begin{definition}[see \protect\cite{ma-ra-sl/13} Definition 14]\label{def2.5}
{\rm Let $y\in\mathbb{D}(\mathbb{R}^{+},\mathbb{R}^{d})$ be such that
$y_{0}\in\overline{\mathrm{D}(A)}$. We say that a~pair $(x,k)\in
\mathbb{D}(\mathbb{R}^{+},\mathbb{R}^{2d})$ is a solution of the
Skorokhod problem  associated with $y$, the maximal monotone
operator $A$ and the projection $\Pi$
($(x,k)=\mathcal{SP}(A,\Pi;y)$ for short) if
\begin{description}
\item[(i)] $x_{t}=y_{t}-k_{t}\in\overline{\mathrm{D}(A)}$,$~t\in \mathbb{R}%
^{+},$

\item[(ii)] $k$ is a function with locally bounded variation such that $%
k_{0}=0$ and for any $(\alpha,\beta)\in\mathrm{Gr}(A)$,
\begin{equation*}
\int_{s}^{t}\langle x_{u}-\alpha,dk_{u}^{c}-\beta\,du\rangle\geq0,\quad0\leq
s<t,\,\,s,t\in{\mathbb{R}^{+}},
\end{equation*}

\item[(iii)] if $|\Delta k_{t}|>0$ then
$x_{t}=\Pi(x_{t-}+\Delta y_{t})$, $t\in\mathbb{R}^{+}$.
\end{description}
}
\end{definition}

\begin{remark}[\protect\cite{ma-ra-sl/13} Lemma 20, 23, Theorem 24]
{\rm Assume (H1), (H2).

(a) \label{rem2.6} For every
$y\in{\mathbb{D}}({\mathbb{R}^{+}},{{\mathbb{R}}}^{d})$ such that
$y_{0}\in\overline{\mathrm{D}(A)}$ there exists a unique solution
of the Skorokhod problem associated with $y$, the maximal monotone
operator $A$ and the projection $\Pi$. Since
$x_{t-}\in\overline{\mathrm{D}(A)}$ and $\Pi$ is non--expansive,
$|\Delta k_{t}|\leq2|\Delta y_{t}|$, $t\in{\mathbb{R}^{+}}$.

(b) Let $y,y^{\prime}\in{\mathbb{D}}({\mathbb{R}^{+}},{{%
\mathbb{R}}}^{d})$ be  such that
$y_{0},y_{0}^{\prime}\in\overline{D(A)}$.
If $%
(x,k)=\mathcal{SP}(A,\Pi;y)$ and $(x^{\prime},k^{\prime})=\mathcal{SP}%
(A,\Pi;y^{\prime})$ then
for every $t\in{\mathbb{R}^{+}}$,%
\begin{equation*}
|x_{t}-x_{t}^{\prime}|^{2}\leq|y_{t}-y_{t}^{\prime}|^{2}-2\int_{0}^{t}%
\langle
y_{t}-y_{t}^{\prime}-y_{s}+y_{s}^{\prime},dk_{s}-dk_{s}^{\prime}\rangle.
\end{equation*}

(c) Let $a\in \mathrm{Int}\left( \mathrm{D}\left( A\right) \right)
$ and $r_{0}>0$ be such that $\overline{B\left( a,r_{0}\right)
}\subset \mathrm{D}\left( A\right)$.
If%
\begin{equation*}
\sup \left\{ \left\vert \hat{u}\right\vert :\hat{u}\in Au,\;u\in
\overline{B\left( a,r_{0}\right) }\right\} \leq \mu <\infty,
\end{equation*}%
then for every $0\leq s<t$,%
\begin{equation*}
r_{0}\left\updownarrow k\right\updownarrow _{\left[ s,t\right] }\leq
\int_{s}^{t}\langle x_{r}-a,dk_{r}\rangle +\frac{1}{2}\sum_{s<r\leq
t}|\Delta k_{r}|^{2}+\mu \int_{s}^{t}|x_{r}-a|dr+(t-s)r_{0}\mu \,.
\end{equation*}

(d)  Let $%
(x^{n},k^{n})=\mathcal{SP}(A,\Pi;y^{n})$, $ n\in{\mathbb{N}}$.
 If $||y^n-y||_T\longrightarrow0$, $T>0$ (resp. $y^{n}\longrightarrow y$ in ${\mathbb{D}}({%
\mathbb{R}^{+}},{{\mathbb{R}}}^{d})$) then%
\begin{align*}
||x^n-x||_T\longrightarrow0\quad\mbox{\rm and}&\quad||k^n-k||_T\longrightarrow0,\quad T>0\\
(\mbox{\rm resp. }\,(x^{n},k^{n},y^{n})\longrightarrow&(x,k,y)\quad\text{in }{\mathbb{D}}({%
\mathbb{R}^{+}},{\mathbb{R}}^{3d})),
\end{align*}
where $(x,k)=\mathcal{SP}(A,\Pi;y)$.}
\end{remark}

We now  give some new estimates for solutions of the Skorokhod
problem. These estimates will play a key role in our proofs in
Section 4. We recall that for $y\in\D$, $\eta>0$ and $T\in\Rp$ the
number $N_{\eta}$ of $\eta$-oscillations is defined as follows:
$N_{\eta}(y,T)\geq k$ if one can find numbers $0\leq t_1\leq
t_2\leq\dots\leq t_{2k-1}\leq t_{2k}\leq T$ such that
 $|y_{t_{2i-1}}-y_{t_{2i}}|>\eta$, $i=1,2,\dots,k$.
\begin{proposition}
\label{prop2.11} Assume (H1), (H2).
 Let $(x,k)$ be a solution of the Skorokhod problem
associated with $y$, $y_0\in \ov{D(A)}$, $A$  and  $\Pi$. Then for
any $a \in \mathrm{Int}(D(A))$, $T\in\Rp$ and the constants
$r_0,\mu$  from  Remark \ref{rem2.6}(c) there exist constants
$C_1,C_2>0$ depending also on $N_{r_0/2}(y,T)$ such that
\begin{enumerate}
\item[\rm{(i)}] $\displaystyle{
\|x\|_T \leq C_1(1+\|y\|_T),}$
\item[\rm{(ii)}] $\displaystyle{\left\updownarrow k\right\updownarrow_{T}
\leq C_2 (1+\|y\|_T^2)}$.
\end{enumerate}
\end{proposition}

\begin{proof} (i)
The proof is similar to that of \cite[Theorem 4.8]{ce/98}.  It is
easily seen that for any $0\leq t\leq T$,
\begin{eqnarray*}
& &|x_t-a|^2  = |y_t-a|^2 + \langle k_t,k_t\rangle -2 \Cl \langle
y_t-a,dk_u
\rangle\\
 & &\qquad= |y_t-a|^2 + 2\Cl \langle k_{u}, dk_u
 \rangle - \sum_{u\leq t}|\Delta k_u|^2-2 \Cl
\langle y_t-a,dk_u \rangle \\
  & &\qquad =  |y_t-a|^2-2\Cl \langle x_u-a,dk_u\rangle
  -2\Cl \langle y_t-y_u,dk_u\rangle
  -  \sum_{u\leq t}|\Delta k_u|^2.
\end{eqnarray*}
Hence
\begin{eqnarray*}
&&|x_t-a|^2-|x_s-a|^2
  =  |y_t-a|^2 - |y_s-a|^2
-2\int_s^t\langle x_u-a,dk_u \rangle \\
& &\qquad\qquad\qquad + 2\int_s^t \langle y_u-y_s,dk_u \rangle
 - 2 \langle k_t,y_t-y_s \rangle -
\sum_{s < u \leq t}|\Delta k_u|^2.
\end{eqnarray*}
for $0 \leq s \leq t \leq T$. By Remark \ref{rem2.6}(c), for any
$0\leq s\leq t\leq T$ we have
\begin{eqnarray}
|x_t-a|^2-|x_s-a|^2 & \leq&
\nonumber |y_t-a|^2 - |y_s-a|^2- 2 r_0
\left\updownarrow k\right\updownarrow _{\left[ s,t\right] }
+ 2\int_s^t\langle
y_u-y_s,dk_u \rangle \\& &\quad +2\langle a-y_t,y_t-y_s
\rangle + 2 \langle x_t-a,y_t-y_s \rangle\nonumber \\
&&\quad+2\mu\int_s^t|x_u-a|du+2\gamma\mu(t-s)\nonumber\\
 &\leq &\label{eq2.9}5\bar y^2 + 4\bar y\bar x
-2r_0\left\updownarrow k\right\updownarrow _{\left[ s,t\right] }
+ 2\int_s^t\langle y_u-y_s,dk_u \rangle\\
&&\quad\nonumber +2\mu\int_s^t|x_u-a|du+2\gamma\mu(t-s),
\end{eqnarray}
where $\bar  y=\|y-a\|_T$, $\bar  x=\|x-a\|$. Set $s_0=0$ and
$s_j=\inf\{s>s_{j-1};|y_s-y_{s_{j-1}}|>r_0/2\}\wedge T$, $j\in\N$.
Let $m=\inf\{j;s_j=T\}$. Since $y\in\D$, it follows easily that
$m<\infty$. Moreover, on each interval $[s_{j-1},s_j]$ there is at
least one $r_0/2$-oscillation of $y$ and hence $m\leq
N_{r_0/2}(y,T)+1$. Observe that for every $j=1,\dots,m$,
\begin{align*}
\int_{s_{j-1}}^{s_j}\langle y_u-y_{s_{k j-1}},dk_u \rangle &=
\int_{(s_{j-1},s_{j})} \langle y_u-y_{s_{j-1}},dk_u \rangle +
\langle \Delta y_{s_j} , \Delta k_{s_j} \rangle\\
 &\leq \frac{r_0}{2}\left\updownarrow k\right\updownarrow _{\left[ s_{j-1},s_j\right] } +
\langle \Delta y_{s_j} , \Delta k_{s_j} \rangle,
\end{align*}
which implies that
\begin{align}
2(\int_{s_{j-1}}^{s_j} \langle
y_u-y_{s_{j-1}},dk_u \rangle - r_0\left\updownarrow
k\right\updownarrow _{\left[ s_{j-1},s_j\right] })
  &\leq
  -\ro\left\updownarrow k\right\updownarrow _{\left[ s_{j-1},s_j\right] } +
2\langle \Delta y_{s_j} , \Delta k_{s_j} \rangle\nonumber\\
&\leq
\label{eq2.10}
-r_0\left\updownarrow k\right\updownarrow
_{\left[ s_{j-1},s_j\right] } +16\bar y^2,
\end{align}
because $|\Delta k_{s_j}|\leq 2|\Delta y_{s_j}|\leq4\bar y$,
$j=1,\dots,m$. Putting $s=s_{j-1}$, $t=s_j$ in (\ref{eq2.9}) and
applying (\ref{eq2.10}) we obtain
\begin{align}
|x_{s_j}&-a|^2-|x_{s_{j-1}}-a|^2  \nonumber\\
& \leq 21\bar y^2+ 4\bar y\bar x
 -r_0\left\updownarrow k\right\updownarrow _{\left[ s_{j-1},s_j\right] }
 +2\mu\int_{s_{j-1}}^{s_j}|x_u-a|du+2r_0\mu(s_j-s_{j-1}).\label{eq2.11}
\end{align}
Set $m_0=\max \{ j, s_j \leq t \}$ and observe that  from
(\ref{eq2.11}) it  follows that
\begin{eqnarray*}
&&\!\!|x_t-a|^2  =
\sum_{j=1}^{m_0}(|x_{s_j}-a|^2-|x_{s_{j-1}}-a|^2 )
+|x_t-a|^2-|x_{s_{j_0}}-a|^2+|x_0-a|^2\\
 & &\,\qquad\quad\leq  m(21\bar y^2+ 4\bar y\bar x)+\bar y^2+
 +2\mu\int_0^t|x_u-a|du+2r_0\mu t.
\end{eqnarray*}
Hence $\bar x^2 \leq  38m^2\bar y^2+ {\bar x^2}/2
+(4\mu^2T^2+2r_0\mu T)$, which implies that
\begin{equation}
\label{eq2.12} \bar x^2 \leq 76m^2\bar y^2+(8\mu^2T^2+4r_0\mu T).
\end{equation}
This completes the proof of (i).
\\
(ii) By (\ref{eq2.11}),
\begin{eqnarray*}
\ro\left\updownarrow k\right \updownarrow _{\left[
s_{j-1},s_j\right] }  &\leq& 21\bar y^2+ 4\bar y\bar x
+2\mu\int_{s_{j-1}}^{s_j}|x_u-a|du+2\gamma\mu(s_j-s_{j-1})
+|x_{s_{j-1}}-a|^2\\
&\leq&
 37\bar y^2+ 2\bar x^2+2\mu^2(s_j-s_{j-1})^2+2r_0\mu(s_j-s_{j-1})
\end{eqnarray*}
for   $j=1,\dots ,m$. Hence
\[ \left\updownarrow k\right\updownarrow _{T } \leq
\sum_{j=1}^{m}\left\updownarrow k\right \updownarrow _{\left[
s_{j-1},s_j\right] } \leq m(37\bar y^2+ 2\bar
x^2)+2\mu^2q^2+2r_0\mu T,
\]
which when combined with (\ref{eq2.12}) proves (ii).
\end{proof}
\begin{corollary}\label{cor2.12}Assume (H1), (H2).
Let $(x,k)$ be a solution of the Skorokhod problem associated with
$y$, $y_0\in \ov{D(A)}$, $A$ and $\Pi$. Let $a \in \mathrm{Int}(D(A))$ and let $\ro,\mu$ denote the constants from Remark
\ref{rem2.6}(c). Set $t'=\inf\{t;|y_t-y_0|\geq \ro/2\}\wedge T$.
Then
\[
\|x-a\|_{t'-}^2\leq  76\|y-a\|^2_{t'-}+(8\mu^2T^2+4\ro\mu T).
\]
\end{corollary}
\begin{proof} It suffices to put  $m=1,s_0=0,s_1=t'$
in  (\ref{eq2.12}).
\end{proof}
\medskip

In  Remark \ref{rem2.6}(d) we recalled  a stability result in
topology $J_1$ for solutions of the Skorokhod problem.  On the
space $\D$ one can also consider  the so-called $S$-topology
introduced by Jakubowski \cite{ja}. By \cite{ja}, $\{y^n\}$ is
relatively $S$-compact if and only if
\begin{equation}\label{eq6.1}
\sup_{n}\|y^n\|_T<+\infty, \quad T\in\Rp
\end{equation}
and for every $\eta>0$,
\begin{equation}\label{eq6.2}
\sup_{n}N_{\eta}(y^n,T)<+\infty,\quad T\in\Rp.
\end{equation}
We also recall  that $y^n$ converges to $y$ in  $S$-topology if
and only if $\{y^n\}$ satisfies (\ref{eq6.1}), (\ref{eq6.2})  and
from every subsequence $\{n_k\}$  one can choose  a further
subsequence $\{n_{k_l}\}$ such that $y^{n_{k_l}}_t\rightarrow y_t$
for every $t$  from some dense subset $Q\subset \Rp$.

\begin{corollary}\label{cor2.14} Assume (H1), (H2).
Let  $%
(x^{n},k^{n})=\mathcal{SP}(A,\Pi;y^{n})$, $n\in{\mathbb{N}}$.
If $\{y^{n}\}$ is relatively compact in the $S$-topology on ${\mathbb{D}}({%
\mathbb{R}^{+}},{{\mathbb{R}}}^{d})$ then%
\begin{equation*}
\{(x^{n},k^{n},y^{n})\}\quad\text{is relatively compact in the $S$-topology on }{\mathbb{D}}({%
\mathbb{R}^{+}},{\mathbb{R}}^{3d}).
\end{equation*}
\end{corollary}
\begin{proof}
By Proposition \ref{prop2.11}(ii), $\{k^n\}$ is relatively
$S$-compact. Hence  $\{x^n=y^n-k^n\}$ is relatively $S$-compact,
which completes the proof.
\end{proof}

\begin{remark}
{\rm The convergence in  topology  $J_1$  in Remark
\ref{rem2.6}(d) can not be replaced by the convergence in
$S$-topology. Convergence in $S$-topology does not hold even in
the case of the  classical Skorokhod problem. To see this, let us
consider the following example. Let $d=1$, $D=\Rp$, $A=\partial
I_D$, $\Pi=\Pi_{\ov{D(A)}}$. Set $y^n_t=0$ for  $t<1$  and $t\geq
1+1/n$,  and $y^n_t=-1$ for $t\in[1,1+1/n)$, $n\in\N$. Then
$y^n\rightarrow 0$
in  $S$-topology.  On the other hand,   $%
(x^{n},k^{n})=\mathcal{SP}(A,\Pi;y^{n})$ has the form
\begin{equation*}
x^n_t=\left\{
\begin{array}{ll}
0 & \text{if }t<1+1/n,\smallskip \\
1, & \text{if }t\geq 1+1/n,%
\end{array}
\right.
\qquad
k^n_t=\left\{
\begin{array}{ll}
0 & \text{if }t<1,\smallskip \\
1, & \text{if }t\geq 1%
\end{array}
\right.
\end{equation*}
for $n\in\mathbb{N}$, so does not converge to
$(0,0)=\mathcal{SP}(A,\Pi;0)$. }
\end{remark}

\section{MSDEs with maximal monotone operators}

Let $\left( \Omega,\mathcal{F},\mathbb{P},(\mathcal{F}_{t})_{t\geq
0}\right) $ be a stochastic basis, i.e. $\left( \Omega,\mathcal{F},\mathbb{P}%
\right) $ is a complete probability space and
$(\mathcal{F}_{t})_{t\geq0}$ is a filtration (an increasing
collection of completed $\sigma$-algebras of $\mathcal{F}$). Let
$Y$ be an $\mathcal{F}_{t}$-adapted  c\`{a}dl\`{a}g stochastic
process such that $Y_{0}\in\overline{\mathrm{D}\left( A\right) }$.

\begin{definition}
\label{def4.1}{\rm We say that a pair $(X,K)$ of $(\mathcal{F}_{t}$)-adapted c\`{a}%
dl\`{a}g stochastic processes is a solution of the Skorokhod
problem  associated with $Y$, the maximal monotone operator $A$
and the projection $\Pi$ ($(X,K)=\mathcal{SP}(A,\Pi;Y)$ in
notation), if  $(X\left( \omega\right) ,\,K\left( \omega\right)
)=\mathcal{SP}(A,\Pi;Y\left( \omega\right) )$ for
$\mathbb{P}$-a.e. $\omega\in\Omega$.
}
\end{definition}

In the case where  $(X,K)=\mathcal{SP}(A,\Pi;Y)$ we will sometimes write $%
X=\mathcal{SP}^{(1)}(A,\Pi;Y)$ and $K=\mathcal{SP}^{(2)}(A,\Pi;Y)$. If $Y$
is continuous then $(X,K)$ is also continuous and does not depend on $\Pi$.
In this case we write $(X,K)=\mathcal{SP}(A;Y)$ and $X=\mathcal{SP}%
^{(1)}(A;Y)$, $K=\mathcal{SP}^{(2)}(A;Y)$.

\begin{proposition}\label{prop1}
Assume (H1), (H2).  If $
Y:\Omega\times\mathbb{R}^{+}\rightarrow\mathbb{R}^{d}$ is an $(\mathcal{F}_{t})$%
-adapted c\`{a}dl\`{a}g  process such that $Y_{0}\in\overline {%
\mathrm{D}\left( A\right)}$, then there exists a unique pair
$\left( X,K\right) $ of $(\mathcal{F}_{t})$-adapted c\`{a}dl\`{a}g
processes which is a solution of  the Skorokhod problem associated
with $Y$, the maximal monotone operator $A$ and the projection
$\Pi$.
\end{proposition}
\begin{proof}
By Remark \ref{rem2.6}(a)  for each $\omega\in\Omega$ there exists
a unique pair $\left( X\left( \omega\right) ,K\left( \omega\right)
\right)$ such that
$X\left( \omega\right) \in\mathbb{D}\left( \mathbb{R}^{+},\mathbb{R}%
^{d}\right)$, $K\left( \omega\right) \in\mathbb{D}\left( \mathbb{R}^{+},%
\mathbb{R}^{d}\right) \cap\mathrm{BV}\left( \left[ 0,T\right] ;\mathbb{D}%
\left( \mathbb{R}^{+},\mathbb{R}^{d}\right) \right)$ and $\left(
X\left( \omega\right) ,K\left( \omega\right) \right)
=\mathcal{SP}(A,\Pi;Y\left( \omega\right) )$. What is left is to
show that $\left( X,K\right) $ is adapted. Let $\{\pi
_{n}=\{0=t_{n0}<t_{n1}<\ldots<t_{nk}<\ldots\}\}$ be a sequence of
partitions of ${\mathbb{R}^{+}}$ such that $\lim
_{n\rightarrow\infty}\max(t_{nk}-t_{n,k-1}) =0$.
Let $Y_{t}^{(n)}=Y_{t_{nk}}$, $t\in\lbrack t_{nk},t_{n,k+1})$, $k\in {%
\mathbb{N}}$, $n\in\mathbb{N}$, denote the sequence of
discretizations of $Y$ and let
$X^{(n)}=\mathcal{SP}(A_{n},\Pi;Y^{(n)})$. By \cite[Lemma 22]{ma-ra-sl/13} the process $X^{n}$ is given by the
formula%
\begin{equation*}
X_{t}^{(n)}=\left\{
\begin{array}{ll}
\mathcal{SP}^{\left( 1\right) }(A,Y_{0})_{t}, & t\in\lbrack0,t_{n,1}), \\
\mathcal{SP}^{\left( 1\right) }\big(A,\Pi(X_{t_{{n,k}%
-}}^{(n)}+Y_{t_{n,k}}-Y_{t_{n,k-1}})\big)_{t-t_{n,k}}, & t\in\lbrack
t_{n,k},t_{n,k+1}),\,k\in{\mathbb{N}},%
\end{array}
\right.
\end{equation*}
and hence is $(\mathcal{F}_{t})$-adapted. Since by \cite[Chapter 3
Proposition 6.5]{et-ku/86}, $
Y^{(n)}\rightarrow Y$ in ${\mathbb{D}}({\mathbb{R}^{+}},{{\mathbb{R}}}%
^{d})$, $\mathbb{P}$-a.s, it follows from Remark \ref{rem2.6}(d)
that
\begin{equation*}
X^{(n)}\rightarrow X\quad\text{in }{\mathbb{D}}({\mathbb{R}^{+}},{{%
\mathbb{R}}}^{d}),~\mathbb{P}\text{--a.s.}
\end{equation*}
Therefore the limit process $X$ is $(\mathcal{F}_{t})$-adapted.
Since $K=Y-X$, $K$ is also $(\mathcal{F}_{t})$-adapted.\hfill
\end{proof}
\medskip

Let $Y,\hat{Y}$ be two $(\mathcal{F}_{t})$-adapted processes with
trajectories in ${\mathbb{D}}\left( {\mathbb{R}^{+}},\mathbb{R}^{d}\right) $
admitting decompositions%
\begin{equation}
Y_{t}=Y_{0}+H_{t}+M_{t}+V_{t}\,,\quad\hat{Y}_{t}=\hat{Y}_{0}+H_{t}+\hat{M}%
_{t}+\hat{V}_{t},\quad t\in{\mathbb{R}^{+}},   \label{eq4.1}
\end{equation}
with $Y_{0}+H_{0}$, $\hat{Y}_{0}+H_{0}\in\overline{\mathrm{D}\left( A\right)
}$, where $H$ is an $(\mathcal{F}_{t})$-adapted process with trajectories in $%
{\mathbb{D}}\left( {\mathbb{R}^{+}},{{\mathbb{R}^{d}}}\right) $, $M$, $\hat{M%
}$ are $(\mathcal{F}_{t})$-adapted local martingales and $V,\hat{V}$ are $%
(\mathcal{F}_{t})$-adapted processes with bounded variation such that $M_{0}=%
\hat{M}_{0}=V_{0}=\hat{V}_{0}=0$.

\begin{lemma}
\label{lem4.3}Assume (H1), (H2).  Let $(X,K)=\mathcal{SP}(A,\Pi;Y)$
and $(\hat{X},\hat {K})=%
\mathcal{SP}(A,\Pi;\hat{Y})$. Then for any $p\in{\mathbb{N}}$
there exists a constant $C_{p}>0$ such that for every stopping
time $\tau$,
\begin{enumerate}\item[\rm(i)]
$\mathbb{E}{\|X-\hat{X}\|_{\tau}^{2p}\leq C_{p}\,\mathbb{E%
}}\Big({|Y_{0}-\hat{Y}_{0}|^{2p}+[M-\hat{M}]_{\tau}^{p}+\updownarrow{\hspace{-0.1cm} V-\hat {V}%
\hspace{-0.1cm}}\updownarrow_{\tau}^{2p}}\Big)$,
\item[\rm(ii)] $\mathbb{E}{\|X-\hat{X}\|^{2p}_{\tau-}\leq
C_{p}\,\mathbb{E}\Big(|Y_{0}-\hat{Y}_{0}|^{2p}+[M-\hat{M}]_{\tau-}^{p}+%
\langle M-\hat{M}\rangle_{\tau-}^{p}+\updownarrow{\hspace{-0.1cm}V-\hat{V}\hspace{-0.1cm}}\updownarrow_{\tau-}^{2p}}\Big)$.%
\end{enumerate}
\end{lemma}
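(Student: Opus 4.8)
The plan is to derive both estimates from the deterministic inequality in Lemma~\ref{lem2.7}--$(ii)$ applied pathwise. Writing $\delta X = X - \hat X$, $\delta Y = Y - \hat Y$, $\delta M = M - \hat M$, $\delta V = V - \hat V$ (note the common drift $H$ cancels in $\delta Y$, so $\delta Y_t - \delta Y_s = \delta M_t - \delta M_s + \delta V_t - \delta V_s$), Lemma~\ref{lem2.7}--$(ii)$ gives
\begin{equation*}
|\delta X_t|^2 \le |\delta Y_0|^2 + |\delta Y_0|\,(\text{junk}) - 2\int_0^t \langle \delta Y_t - \delta Y_s,\, d(\delta K)_s\rangle,
\end{equation*}
but it is cleaner to use it in the stopped form and split the stochastic and bounded-variation parts. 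The term $\int_0^t \langle \delta M_t - \delta M_s, d(\delta K)_s\rangle$ is the delicate one; the $\delta V$ contribution is handled by the crude bound $|\int_0^t \langle \delta V_t - \delta V_s, d(\delta K)_s\rangle| \le 2\updownarrow{\hspace{-0.1cm}\delta V\hspace{-0.1cm}}\updownarrow_t \cdot \updownarrow{\hspace{-0.1cm}\delta K\hspace{-0.1cm}}\updownarrow_t$, and one needs a companion bound controlling $\updownarrow{\hspace{-0.1cm}\delta K\hspace{-0.1cm}}\updownarrow_t$ in terms of $\|\delta X\|_t$, $\updownarrow{\hspace{-0.1cm}\delta V\hspace{-0.1cm}}\updownarrow_t$, and the martingale part — this is a standard consequence of the monotonicity of $K$ and the fact that $\mathrm{Int}(\mathrm{D}(A))\neq\emptyset$ (pick an interior ball and use $(ii)$ of the solution definition to get $\langle X_u - \alpha, dK_u^c\rangle \ge r\,d\updownarrow{\hspace{-0.1cm}K^c\hspace{-0.1cm}}\updownarrow_u - \langle X_u-\alpha,\beta\rangle du$ for $(\alpha,\beta)$ with $\alpha$ centre of the ball of radius $r$; the jump part is controlled by $|\Delta K_u|\le 2|\Delta Y_u|$). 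Combining these yields an a priori estimate of the form $\updownarrow{\hspace{-0.1cm}\delta K\hspace{-0.1cm}}\updownarrow_t \le C(\|\delta X\|_t + \updownarrow{\hspace{-0.1cm}\delta V\hspace{-0.1cm}}\updownarrow_t + \text{martingale increments})$.

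Next I would raise everything to the power $p$ and take expectations after stopping at $\tau$. The clean route for $(i)$: from the pathwise bound, $\|\delta X\|_\tau^{2} \le |\delta Y_0|^{2} + 2\updownarrow{\hspace{-0.1cm}\delta V\hspace{-0.1cm}}\updownarrow_\tau\updownarrow{\hspace{-0.1cm}\delta K\hspace{-0.1cm}}\updownarrow_\tau + 2\sup_{t\le\tau}\big|\int_0^t\langle \delta M_t - \delta M_s, d(\delta K)_s\rangle\big|$. After substituting the $\updownarrow{\hspace{-0.1cm}\delta K\hspace{-0.1cm}}\updownarrow$ estimate and using Young's inequality to absorb the resulting $\|\delta X\|_\tau^2$ terms on the left, one is left to control the stochastic term. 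Rewrite $\int_0^t\langle \delta M_t - \delta M_s, d(\delta K)_s\rangle = \langle \delta M_t, \delta K_t\rangle - \int_0^t \langle \delta K_{s-}, d(\delta M)_s\rangle - [\delta M, \delta K]_t$ by integration by parts; the middle term is a local martingale, the first and last are bounded by $\|\delta M\|_\tau \updownarrow{\hspace{-0.1cm}\delta K\hspace{-0.1cm}}\updownarrow_\tau$ and by $\sqrt{[\delta M]_\tau}\,(\text{something})$ — here I would invoke Burkholder--Davis--Gundy for $\mathbb{E}\|\delta M\|_\tau^{2p}$ and the elementary inequality $\|\delta M\|_\tau^{2} \le 4[\delta M]_\tau$ (Doob/BDG constants) plus the deterministic jump-size control. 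Iterating Young's inequality with carefully chosen weights lets one absorb all $\mathbb{E}\|\delta X\|_\tau^{2p}$ and $\mathbb{E}\updownarrow{\hspace{-0.1cm}\delta K\hspace{-0.1cm}}\updownarrow_\tau^{2p}$ contributions, leaving $C_p\,\mathbb{E}(|\delta Y_0|^{2p} + [\delta M]_\tau^p + \updownarrow{\hspace{-0.1cm}\delta V\hspace{-0.1cm}}\updownarrow_\tau^{2p})$. I expect this absorption-with-BDG bookkeeping to be the main obstacle: one must be careful that the constants do not blow up and that the local martingale terms are genuine martingales after a localization argument (stop also at the reducing sequence for $\delta M$ and pass to the limit by Fatou).

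For $(ii)$, the difference is that $\sup_{t<\tau}$ excludes a possible large jump at $\tau$ itself, so one works on $[0,\tau)$ and replaces $[\delta M]_\tau$ by $[\delta M]_{\tau-}$; the extra term $\langle \delta M - \hat{\text{(already differenced)}}\rangle_{\tau-}^p$ appears precisely because controlling $\mathbb{E}\sup_{t<\tau}$ of the integral-by-parts local-martingale piece $\int_0^t\langle \delta K_{s-}, d(\delta M)_s\rangle$ via BDG produces $\mathbb{E}\big(\int_0^{\tau-}|\delta K_{s-}|^2 d[\delta M]_s\big)^{p/2}$, and after bounding $|\delta K_{s-}|$ by $\updownarrow{\hspace{-0.1cm}\delta K\hspace{-0.1cm}}\updownarrow_{\tau-}$ one needs both $[\delta M]_{\tau-}$ and its compensator $\langle \delta M\rangle_{\tau-}$ to close the estimate (the compensator enters when one bounds $\mathbb{E}[\delta M]_{\tau-}^p$ by $\mathbb{E}\langle\delta M\rangle_{\tau-}^p$ via the Burkholder inequality for the purely discontinuous / general case, or symmetrically). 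Concretely I would: (1) fix the reducing localization, (2) apply the pathwise Lemma~\ref{lem2.7}--$(ii)$ on $[0, t]$ for $t<\tau$, (3) integrate by parts the cross term, (4) take $\sup_{t<\tau}$, (5) apply Doob/BDG to the martingale pieces, (6) use the $\updownarrow{\hspace{-0.1cm}\delta K\hspace{-0.1cm}}\updownarrow$ a priori bound and Young's inequality to absorb, (7) remove the localization by monotone convergence. The rest is routine.
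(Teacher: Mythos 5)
Your proposal starts from the same place as the paper (the pathwise inequality of Lemma~\ref{lem2.7}--$(ii)$, followed by integration by parts, BDG, Young and a localization), but the way you handle the cross term contains a genuine gap. You propose to bound $\big|\int_0^t\langle \delta V_t-\delta V_s,d(\delta K)_s\rangle\big|$ and the residual terms from the martingale integration by parts by the total variation $\updownarrow\! K-\hat K\!\updownarrow_t$, and you then claim an a priori estimate of the form $\updownarrow\! K-\hat K\!\updownarrow_t\le C(\|X-\hat X\|_t+\updownarrow\! V-\hat V\!\updownarrow_t+\cdots)$ coming from the interior--ball argument. That argument does not deliver such a bound: choosing $\alpha$ in an interior ball and using condition $(ii)$ of the definition controls $\updownarrow\! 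K\!\updownarrow_t$ and $\updownarrow\! \hat K\!\updownarrow_t$ \emph{separately}, each in terms of the absolute size of the corresponding input ($\|Y\|_t$, $[M]_t$, etc.), not in terms of the \emph{differences} $\|X-\hat X\|$, $[M-\hat M]$, $\updownarrow\! V-\hat V\!\updownarrow$. The variation of the difference $K-\hat K$ is not a quantity the theory controls by differences of the data (the stability results of Theorem~\ref{thm2.14} are in sup norm, not in variation norm), so your absorption step cannot close with only the difference quantities appearing on the right-hand side of $(i)$ and $(ii)$.

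The paper's proof avoids ever estimating $\updownarrow\! K-\hat K\!\updownarrow$. After the integration by parts reduces the cross term to $\int_0^t\langle K_{s-}-\hat K_{s-},\,d(Y_s-\hat Y_s)\rangle$, it substitutes $K-\hat K=(Y-\hat Y)-(X-\hat X)$. The $\langle Y_{s-}-\hat Y_{s-},d(Y_s-\hat Y_s)\rangle$ part is an exact differential (It\^o), contributing only $|Y_0-\hat Y_0|^2$ and $[Y-\hat Y]_t$, and what remains is $-2\int_0^t\langle X_{s-}-\hat X_{s-},d(Y_s-\hat Y_s)\rangle$, an integral whose integrand is precisely the quantity $\|X-\hat X\|$ being estimated. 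BDG applied to its martingale part and the crude bound on its finite-variation part then give terms that Cauchy--Schwarz and Young absorb into the left-hand side, yielding exactly the difference quantities $[M-\hat M]_\tau^p$ and $\updownarrow\! V-\hat V\!\updownarrow_\tau^{2p}$. This algebraic substitution is the key idea your proof is missing; with it, the interior-ball control of the variation of $K$ is not needed at all for this lemma. Your remarks on $(ii)$ (working on $[0,\tau)$, the appearance of $\langle M-\hat M\rangle_{\tau-}$) are in the right direction, but the paper obtains $(ii)$ by the Metivier--Pellaumail inequality rather than by a separate BDG bookkeeping.
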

\begin{proof}
By Remark \ref{rem2.6}(b), for every $t\in{\mathbb{R}^{+}}$,
\begin{equation}
|X_{t}-\hat{X}_{t}|^{2}\leq|Y_{t}-\hat{Y}_{t}|^{2}-2\int_{0}^{t}\langle
Y_{t}-\hat{Y}_{t}-Y_{s}+\hat{Y}_{s},dK_{s}-d\hat{K}_{s}\rangle.\label{eq
Lemma 4.3}
\end{equation}
By the equality
$\int_{0}^{t}\langle Y_{s}-\hat{Y}_{s},d(K_{s}-\hat{K}_{s})\rangle=%
\int_{0}^{t}\langle Y_{s-}-\hat{Y}_{s-},d(K_{s}-\hat{K}_{s})\rangle+[Y-\hat{Y%
},K-\hat{K}]_{t}$ and the integration by parts formula,
\begin{align*}
2\int_{0}^{t}\langle &Y_{t}-\hat{Y}_{t}-Y_{s}+\hat{Y}%
_{s},dK_{s}-d\hat{K}_{s}\rangle\medskip\\&=2\int_{0}^{t}\langle Y_{s-}-\hat{Y}%
_{s-},dY_{s}-d\hat{Y}_{s}\rangle-2\int_{0}^{t}\langle X_{s-}-\hat{X}%
_{s-},dY_{s}-d\hat{Y}_{s}\rangle\medskip \\
&=|Y_{t}-\hat{Y}_{t}|^{2}-|Y_{0}-\hat{Y}_{0}|^{2}-[Y-\hat {Y},Y-%
\hat{Y}]_{t}-2\int_{0}^{t}\langle X_{s-}-\hat{X}_{s-},dY_{s}-d\hat {Y}%
_{s}\rangle%
\end{align*}
which when combined with  (\ref{eq Lemma 4.3}) gives
\begin{equation*}
|X_{t}-\hat{X}_{t}|^{2}\leq|Y_{0}-\hat{Y}_{0}|^{2}+[Y-\hat{Y},Y-\hat{Y}%
]_{t}+2\int_{0}^{t}\langle X_{s-}-\hat{X}_{s-},dY_{s}-d\hat{Y}_{s}\rangle\,.
\end{equation*}
Therefore for any $p\in\mathbb{N}$ and any stopping time $\tau$
there exists $c_{p}>0$ such that%
\begin{align*}
\mathbb{E}\;{\|X-\hat{X}\|_{\tau}^{2p}}\leq c_{p}\,&\Big(\mathbb{
E}|Y_{0}-\hat{Y}_{0}|^{2p}+\mathbb{E}\left[ Y-\hat{Y}\right]
_{\tau}^{p}\\
&\qquad+\mathbb{E}
\sup_{t\leq\tau}\big|\int_{0}^{t}\langle X_{s-}-\hat {X}_{s-},dM_{s}-d\hat{M}%
_{s}\rangle\big|^{p} \\
&\qquad+\mathbb{E}\sup_{t\leq\tau}\big|\int_{0}^{t}\langle X_{s-}-\hat{
X}_{s-},dV_{s}-d\hat{V}_{s}\rangle\big|^{p}\Big).
\end{align*}
The rest of the proof runs as  the proof of  \cite[Theorem 1]{sl/94}.
\end{proof}
\medskip

We will need  the following conditions on  the coefficient $f$:
\begin{description}

\item[(H3)] $f:{\mathbb{R}}^{d}\rightarrow {{%
\mathbb{R}^{d}}}\otimes{{\mathbb{R}^{d}}}$ is a continuous
function such that%
\begin{equation*}
\|f(x)\|\leq L(1+|x|),\quad x\in{{\mathbb{R}^{d}}}.
\end{equation*}
\item[(H4)] For any $N\in{\mathbb{N}}$ there is $K_{N}>0$ such that%
\begin{equation*}
\|f(x)-f(y)\|\leq K_{N}|x-y|,\quad x,y\in B(0,N).
\end{equation*}
\end{description}

Let $H$ be an $(\mathcal{F}_{t})$-adapted process with trajectories in ${%
\mathbb{D}}\left( {\mathbb{R}^{+}},{{\mathbb{R}^{d}}}\right) $
such that $H_{0}\in \overline{\mathrm{D}(A)}$ and let $Z$ be an
$(\mathcal{F}_{t})$-adapted semimartingale such that $Z_{0}=0$.

\begin{definition}{\rm
\label{def4.4}We say that a pair $(X,K)$ of
$(\mathcal{F}_{t})$-adapted processes with trajectories  in
$\mathbb{D}\left( {\mathbb{R}^{+}},{{\mathbb{R}^{d}}}\right)$ is a
strong solution of  MSDE (\ref{eq1.1}) if
$(X,K)=\mathcal{SP}(A,\Pi;Y)$, $\mathbb{P}$-a.s.,  where
\begin{equation*}
Y_{t}=H_{t}+\int_{0}^{t}\langle f(X_{s-}),dZ_{s}\rangle,\;t\in\mathbb{R}%
^{+}.
\end{equation*}
}
\end{definition}

\begin{theorem}
\label{thm4.5} Assume (H1)--(H4). Let $H$ be an
$(\mathcal{F}_{t})$-adapted process with
trajectories in ${\mathbb{D}}\left( {\mathbb{R}^{+}},{{\mathbb{R}^{d}}}%
\right) $ such that $H_{0}\in\overline{\mathrm{D}(A)}$, and let $Z$ be an $%
(\mathcal{F}_{t})$-adapted semimartingale with $Z_{0}=0$. Then there exists a
unique strong solution $(X,K)$ of the MSDE (\ref{eq1.1}).
\end{theorem}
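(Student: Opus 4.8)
The natural approach is a Picard--type fixed point argument built on top of the Skorokhod map, localized so that the Lipschitz map $f$ against the semimartingale $Z$ can be controlled by the estimates of Lemma~\ref{lem4.3}. First I would write $Z = M + V$, a decomposition of the semimartingale into a local martingale $M$ with $M_0=0$ and an adapted c\`adl\`ag process $V$ of bounded variation with $V_0=0$. For a candidate process $X$ I set
\begin{equation*}
\Phi(X)_t := \mathcal{SP}\big(A,\Pi;\,Y^X\big)_t, \qquad Y^X_t := H_t + \int_0^t \langle f(X_{s-}),dZ_s\rangle,
\end{equation*}
which is well defined by Theorem~\ref{thm2.14}--II since $Y^X$ is c\`adl\`ag, adapted and $Y^X_0 = H_0 \in \overline{\mathrm{D}(A)}$; a solution of \eqref{eq1.1} is exactly a fixed point of $\Phi$. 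The process $\int_0^\cdot \langle f(X_{s-}),dZ_s\rangle$ splits as a local martingale part $\int_0^\cdot \langle f(X_{s-}),dM_s\rangle$ and a bounded variation part $\int_0^\cdot \langle f(X_{s-}),dV_s\rangle$, matching the structure \eqref{eq4.1} required by Lemma~\ref{lem4.3}.

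The core estimate is obtained by applying Lemma~\ref{lem4.3}$(i)$ with $p=1$ to the two outputs $X = \Phi(\bar X)$, $\hat X = \Phi(\hat{\bar X})$ driven by $Y^{\bar X}$, $Y^{\hat{\bar X}}$: the difference of noises has zero initial condition, martingale part $\int_0^\cdot \langle f(\bar X_{s-})-f(\hat{\bar X}_{s-}),dM_s\rangle$ and bounded variation part $\int_0^\cdot \langle f(\bar X_{s-})-f(\hat{\bar X}_{s-}),dV_s\rangle$. Using the Lipschitz bound $\|f(x)-f(y)\|\le L|x-y|$, one gets for every stopping time $\tau$
\begin{equation*}
\mathbb{E}\,\|\Phi(\bar X)-\Phi(\hat{\bar X})\|_\tau^2 \le C\,L^2\,\mathbb{E}\!\left( \int_0^\tau |\bar X_{s-}-\hat{\bar X}_{s-}|^2 d[M]_s + \updownarrow\!\bar X - \hat{\bar X}\!\updownarrow\text{-type terms against } \updownarrow\!V\!\updownarrow \right),
\end{equation*}
which after a Gronwall argument in the increasing process $[M]+\langle M\rangle+\updownarrow\!V\!\updownarrow$ gives a contraction on a random time interval $[0,\tau_1]$ with $\tau_1>0$ a.s. The standard device is a sequence of stopping times $\tau_k$ reducing $M$ and bounding the total variation of $V$, so that on each $[\![\tau_{k-1},\tau_k]\!]$ the relevant increasing process is small enough (or, more robustly, one uses the Metivier--Pellaumail inequality, exactly as invoked for Lemma~\ref{lem4.3}$(ii)$, to avoid any size restriction); existence and uniqueness are then obtained on $[0,\tau_1]$ and patched together across the $\tau_k$, whose limit is $+\infty$ a.s. because there are only finitely many jumps of size bounded below and $M$, $V$ are locally well behaved. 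Adaptedness of the fixed point is inherited from the Picard iterates, each of which is adapted because the Skorokhod map applied to an adapted c\`adl\`ag input produces an adapted output (as shown in the Proposition of Section~3).

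The main obstacle is the patching/localization step: because $Z$ has jumps, $Y^X$ has jumps, and the naive contraction constant $CL^2\,\mathbb{E}([M]_\tau + \updownarrow\!V\!\updownarrow_\tau^2)$ need not be $<1$ on any deterministic interval, nor even on $[0,\tau]$ for a single reducing stopping time, since a single large jump of $[M]$ or $V$ can dominate. The clean fix is to run the fixed point argument with the Metivier--Pellaumail inequality in place of Burkholder--Davis--Gundy, which yields the contraction estimate with a constant proportional to $\mathbb{E}(\sup_{s\le\tau}\Delta[M]_s + \dots)$ that can be made small by choosing $\tau$ to stop just before the next ``big'' jump; one must be careful that the jump at the stopping time itself is handled by condition $(iv)$ of Definition~\ref{def4.4} (the projection formula $X_\tau = \Pi(X_{\tau-}+\Delta Y^X_\tau)$ is an explicit, contraction-Lipschitz update, so it does not interfere with uniqueness). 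Once the a priori $L^2$ bound and the localized contraction are in hand, everything else is routine.
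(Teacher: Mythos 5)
Your proposal follows essentially the same route as the paper: localize by stopping times, run the Banach fixed point for $Y\mapsto\mathcal{SP}^{(1)}(A,\Pi;H+\int_0^{\cdot}\langle f(Y_{s-}),dZ_s\rangle)$ stopped strictly before $\tau$, get the contraction constant from Lemma \ref{lem4.3}--$(ii)$ (the Metivier--Pellaumail version) because $[M]_{\tau-}$, $\langle M\rangle_{\tau-}$ and $\updownarrow V\updownarrow_{\tau-}$ are bounded by the small parameter $b$, handle the jump at $\tau$ by the explicit projection update, and patch over $\tau_k\uparrow\infty$. The only points the paper makes explicit that you gloss over are the preliminary truncation of $H$ and $Z$ so that $Z$ has bounded jumps and is therefore a special semimartingale whose canonical decomposition has a locally square-integrable martingale part (needed for $\langle M\rangle$ to exist) and a predictable bounded-variation part, and the fact that the contraction comes directly from the smallness of the stopped brackets rather than from a Gronwall argument.
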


\begin{proof}
Without loss of  generality we may assume that
$|H_{t}|,|Z_{t}|\leq c$ for some constant $c>0$. Then $|\Delta
Z|\leq2c$, so by \cite[Chapter III Theorem 32]{pr/04}  $Z$ is a
special semimartingale  admitting a unique decomposition of the form $%
Z_{t}=M_{t}+V_{t}$, $t\in{\mathbb{R}^{+}}$, where $M$ is a local
square-integrable martingale with $|\Delta M|\leq4c$ and $V$ is a
predictable process with locally bounded variation with $|\Delta V|\leq2c$.

{\it Step 1.}  We first replace (H3), (H4) by the the following
stronger condition:
\begin{description}
\item[(H3*)] $f$ is a Lipschitz continuous function,
i.e. there exists  $L>0$ such that%
\begin{equation*}
\|f(x)-f(y)\|\leq L|x-y|,~x,y\in{{\mathbb{R}^{d}}}.
\end{equation*}
\end{description}
Let $L,C_{1}$ be the constants from (H3*) and Lemma \ref%
{lem4.3}, respectively.
Set%
\begin{equation*}
\tau^{\prime}=\inf\{t:|H_{t}+\langle f(H_{0}),Z_{t}\rangle-H_{0}|\geq
\ro/2\}\wedge1.
\end{equation*}
First we will show the existence and uniqueness of a solution of
the MSDE (\ref{eq1.1}) on the interval $[0,\tau)$, where $
\tau=\inf\{t>0:\max([M]_{t},\langle
M\rangle_{t},\updownarrow{\hspace{-0.1cm}
V\hspace{-0.1cm}}\updownarrow_{t}^{2})>b\}\wedge\tau^{\prime}$.
Set %
\begin{equation*}
\mathcal{S}^{2}=\{Y:Y\text{ is }\mathcal{F}_{t}\text{--adapted, }Y_{0}=H_{0}%
\text{, }Y_{t}=Y_{t}^{\tau-}\text{, }\mathbb{E}\sup_{t\geq0}|Y_{t}|^{2}<%
\infty\}
\end{equation*}
and define the mapping
$\Phi:\mathcal{S}^{2}\longrightarrow\mathcal{S}^{2}$ by putting
$\Phi(Y)$ to be the first coordinate of the solution of the
Skorokhod problem associated with
$H^{\tau-}+\int_{0}^{\cdot}\langle
f(Y_{s-}),dZ_{s}^{\tau-}\rangle$. We will show that $\Phi$ is a
contraction. Let us first observe that $
\Phi(H_{0})=H^{\tau-}+\langle f(H_{0}),Z^{\tau-}\rangle-K^{\tau-}$
and hence, by Corollary \ref{cor2.12},
$\Phi(H_{0})\in\mathcal{S}^{2}.$
By Lemma \ref{lem4.3}(ii), for any $Y,Y^{\prime}\in\mathcal{%
S}^{2}$ we have
\begin{align*}
\mathbb{E}\sup_{t<\tau}|\Phi(Y)_{t}-\Phi(Y^{\prime})_{t}|^{2}%
\leq&
C_{1}\{E\int_{0}^{\tau-}|f(Y_{s-})-f(Y_{s-}^{\prime})|^{2}\,d([M]_{s}+%
\langle M\rangle_{s}) \\
&\qquad+\mathbb{E}(\int_{0}^{\tau-}|f(Y_{s-})-f(Y_{s-}^{\prime
})|\,d\updownarrow{\hspace{-0.1cm}V\hspace{-0.1cm}}\updownarrow_{s})^{2}\}\medskip \\
\leq&3C_{1}bL^{2}\mathbb{E}%
\sup_{t<\tau}|Y_{t}-Y_{t}^{\prime}|^{2}=\frac{1}{2}\mathbb{E}%
\sup_{t<\tau}|Y_{t}-Y_{t}^{\prime}|^{2}.%
\end{align*}
Hence $\left[ \Phi(Y)-\Phi(H_{0})\right] \in\mathcal{S}^{2}$, and
consequently $\Phi(Y)\in S^{2}$; moreover we see that
$\Phi:S^{2}\rightarrow S^{2}$ is a contraction. Therefore  by the
Banach contraction principle
there exists a fixed point $X^{1}$. Note that $X^{1}$ is the first coordinate of the unique solution of (\ref%
{eq1.1}) on $[0,\tau)$. Indeed, set $X(0)=X_0$ and
\[(X(n),K(n))=\mathcal{SP}(A,\Pi;Y(n)),\]
where $Y(n)=H^{\tau-}+\int_{0}^{\cdot}\langle
f(X(n-1)_{s-}),dZ_{s}^{\tau-}\rangle)$, $n\in\N.$
Since $X(n)$ tends to $X^1$  in $S^{2}$, also $Y(n)$ tends to $Y^1=H^{\tau-}+\int_{0}^{\cdot}\langle
f(X^1_{s-}),dZ_{s}^{\tau-}\rangle$  in $S^{2}$, and by Remark \ref{rem2.6}(d), $(X(n),K(n))$ tends uniformly in probability to $(X^1,K^1)=\mathcal{SP}(A,\Pi;Y^1)$, which is a  unique solution of (\ref%
{eq1.1}) on $[0,\tau)$.
Moreover, putting
$X_{\tau}^{1}=\Pi(X_{\tau-}^{1}+ \Delta H_{\tau }+\langle
f(X_{\tau-}^{1}),\Delta Z_{\tau}\rangle)$ and  $K^1_{\tau}=X^1_{\tau}-H_{\tau}-\int_{0}^{\tau}\langle
f(X^1_{s-}),dZ_{s}\rangle$ we obtain a solution on
$[0,\tau]$.

Let us now define the sequence of stopping times
$\{\tau_{k}\}$ by putting $\tau _{1}=\tau$ and
\begin{equation*}
\tau_{k+1}=\tau_{k}+\inf\{t>0:\max([\hat{M}]_{t},\langle\hat{M}\rangle _{t},
\updownarrow{\hspace{-0.1cm}\hat{V}\hspace{-0.1cm}}
\updownarrow_{t}^{2})>b\}\wedge\tau_{k}^{\prime},\quad k\in{\mathbb{N}},
\end{equation*}
where $\hat{M}_{t}=M_{\tau_{k}+t}-M_{\tau_{k}}$, $\hat{V}_{t}=V_{%
\tau_{k}+t}-V_{\tau_{k}}$, $\tau_{k}^{\prime}=\inf\{t:|H_{\tau_{k}+t}+%
\langle
f(H_{\tau_{k}}),\hat{Z}_{t}\rangle-H_{\tau_{k}}|\geq\ro/2\}\wedge1
$. Arguing as above, we obtain a solution $(X^{k+1},K^{k+1})$ of
(\ref{eq1.1}) on $[\tau_{k},\tau_{k+1}]$. Since
$\tau_{k}\uparrow+\infty$, we get a solution $(X,K)$ on
${\mathbb{R}^{+}}$ by putting together the solutions $(X^{k+1},K^{k+1})$ on
$[\tau_{k},\tau_{k+1}]$, $k\in${$\mathbb{N}$}.

{\it Step 2.} The general case. For any $N\in{\mathbb{N}}$ there
exists a Lipschitz continuous function $f_{N}$ satisfying (H3*)
and such that $f_{N}(x)=f(x)$, $x\in B(0,N)$ and $f_{N}(x)=0$,
$x\in B^{c}(0,N+1)$. By Step 1, for any $N\in{\mathbb{N}}$ there
exists a unique strong solution of the equation
\begin{equation}
X_{t}^{N}+K_{t}^{N}=H_{t}+\int_{0}^{t}\langle
f_{N}(X_{s-}^{N}),\,dZ_{s}\rangle,\quad t\in{\mathbb{R}^{+}}.
\label{eq4.14}
\end{equation}
Set $\gamma_{0}=0$ and $
\gamma_{N}=\inf\{t:|X_{t}^{N}|>N\}$, $N\in{\mathbb{N}}$.
Since $f_{N}(x)=f_{N+1}(x)$ for $x\in B(0,N)$, $X_{t}^{N}=X_{t}^{N+1}$ for $%
t<\gamma_{N}$ and $\gamma_{N}\leq\gamma_{N+1}$. In order to finish the proof
it suffices  to show that%
\begin{equation}
\gamma_{N}\nearrow+\infty,\;\mathbb{P}\text{-a.s.}
\label{eq4.155}
\end{equation}
and  observe that the unique solution of (\ref{eq1.1}) has the
form $X_{t}=X_{t}^{N}$, $t\in\lbrack\gamma_{N-1},\gamma_{N})$,
$N\in{\mathbb{N}}$. Let $
(\wh{X},\wh{K})$ denote the solution of the Skorokhod problem with $%
\wh{Y}=H$. Set $
\beta_{k}=\inf\{t:|\wh{X}_{t}|
\vee\updownarrow{\hspace{-0.1cm}V\hspace{-0.1cm}}\updownarrow_{t}\vee\lbrack
M]_{t}\vee\langle M\rangle_{t}>k\}\wedge k$, $k\in{\mathbb{N}}$.
It is clear that%
\begin{equation}
\beta_{k}\nearrow+\infty,\;\mathbb{P}\text{-a.s.}   \label{eq4.16}
\end{equation}
By Lemma \ref{lem4.3}(ii) with $p=1$ and by  (H3), for every
stopping time $\sigma$,%
\begin{align*}
\mathbb{E}\sup_{t<\sigma\wedge\beta_{k}}&|X_{t}^{N}-\wh{X}_{t}|^{2}\leq C_{1}~\mathbb{E}{\Big[}\int_{0}^{(\sigma\wedge%
\beta_{k})-}\|f(X_{s-}^{N})\|^{2}\,d[M]_{s}\\
&\quad\qquad\quad+\int_{0}^{(\sigma\wedge%
\beta_{k})-}\|f(X_{s-}^{N})\|^{2}\,d\left\langle M\right\rangle
_{s}+k\int_{0}^{(\sigma\wedge\beta_{k})-}\|f(X_{s-}^{N})\|^{2}%
\,d\updownarrow{\hspace{-0.1cm}V\hspace{-0.1cm}}\updownarrow_{s}{\Big]} \\
&\quad\quad\leq C(k,L)~{\Big[}1+\mathbb{E}\int_{0}^{(\sigma\wedge%
\beta_{k})-}\sup_{u\leq s} |X_{u-}^{N}-\wh{X}_{u-}|^{2}\,
d(\updownarrow{\hspace{-0.1cm}V\hspace{-0.1cm}}\updownarrow
+[M]+\langle M\rangle
)_{s}{\Big].}%
\end{align*}
Therefore for every stopping time $\sigma$,%
\begin{align*}
\mathbb{E}
&\sup_{t<\sigma}|X_{t}^{N,\beta_{k}-}-\wh{X}_{t}^{\beta_{k}-}|^{2}\\
&\,\,\leq C(k,L)\Big(1+\mathbb{E}\int_{0}^{\sigma-}\sup_{u\leq
s}|X_{u-}^{N,\beta_{k}-}-\wh{X}_{u-}^{\beta_{k}-}|^{2}d(\updownarrow{\hspace{-0.1cm}V\hspace{-0.1cm}}\updownarrow^{\beta_{k}-}
+[M^{\beta_{k}-}]+\langle M^{\beta_{k}-}\rangle )_{s}\Big).
\end{align*}
By the above and Gronwall's lemma we obtain that
\[\mathbb{E}\sup_{t<\beta_{k}}|X_{t}^{N}-X_{t}^{\prime}|^{2}\leq
C(k,L)\exp \{3k\;C(k,L)\},\] which implies that for every
$k\in{\mathbb{N}}$,
$
\sup_{N}\mathbb{E}\sup_{t<\beta_{k}}|X_{t}^{N}|^{2}\leq C^{\prime}(k,L)$.
By this and Chebyshev's inequality,%
\begin{equation*}
\mathbb{P}(\gamma_{N}<\beta_{k})\leq P(\sup_{t<\beta_{k}}|X_{t}^{N}|\geq N)\leq%
\frac{C^{\prime}(k,L)}{N^{2}},
\end{equation*}
which converges to zero as $N\rightarrow\infty$. Therefore
(\ref{eq4.16}) implies (\ref{eq4.155}), which competes the proof.
\end{proof}
\medskip


Now we are going to study approximations of solutions of the MSDE
(\ref{eq1.1})  under the assumptions (H1)--(H4). First we
consider discrete approximation schemes, which are
constructed with the natural analogy to the Euler scheme. Let $%
\{\pi_{n}=\{0=t_{n0}<t_{n1}<\dots<t_{nk}<\dots\}\}$  be a sequence
of partitions of $\mathbb{R}^{+}$ such that  $\lim
_{n\rightarrow\infty}\max(t_{nk}-t_{n,k-1} )=0$.

Set%
\begin{equation}
\bar{X}_{t}^{n}=\left\{
\begin{array}{ll}
\mathcal{SP}^{\left( 1\right) }(A;H_{0})_{t}, & t\in\lbrack0,t_{n,1}),%
\medskip \\[2mm]
\mathcal{SP}^{\left( 1\right) }\big(A;\Pi(\bar{X}%
_{t_{n,k-1}}^{n}+(H_{t_{nk}}-H_{t_{n,k-1}}) & \medskip \\
\quad+\langle f(\bar{X}_{t_{n,k-1}}^{n}),(Z_{t_{nk}}-Z_{t_{n,k-1}})%
\rangle)\big)_{t-t_{n,k}}, & t\in\lbrack t_{nk},t_{n,k+1}),\,k\in {\mathbb{N}%
}.\label{eq33}
\end{array}
\right.
\end{equation}
Let $(\mathcal{F}_{t}^{n})_{t\geq0} $ denote the discretization of $
(\mathcal{F}_{t})_{t\geq0}$, i.e. $\mathcal{F}_{t}^{n}=\mathcal{F}_{t_{nk}}$ for $%
t\in\lbrack t_{nk},t_{n,k+1})$, and let $H_{t}^{(n)}=H_{t_{nk}}$, $%
Z_{t}^{(n)}=Z_{t_{nk}}$ for $t\in\lbrack t_{nk},t_{n,k+1})$,
$k\in{\mathbb{N} }$, $n\in{\mathbb{N}}$.
Set%
\begin{equation*}
\bar{Y}_{t}^{n}=H_{t}^{(n)}+\int_{0}^{t}\langle f(\bar{X}_{s-}^{n}),%
\,dZ_{s}^{(n)}\rangle,\quad t\in{\mathbb{R}^{+}},\,n\in{\mathbb{N}}
\end{equation*}
and note that $H^{(n)},Z^{(n)}$ and $\bar{X}^{n}$, $\bar{K}^{n}=\bar{X}^{n}-%
\bar{Y}^{n}$ are $(\mathcal{F}_{t}^{n})$-adapted processes such that $(\bar{X}%
^{n},\bar{K}^{n})=\mathcal{SP}(A,\Pi;\bar{Y}^{n})$, $n\in{\mathbb{N}}$.%

\begin{theorem}
\label{thm4.6}Under  assumptions (H1)--(H4),
\begin{enumerate}
\item[\rm(i)] $\displaystyle{
{(\bar{X}^{n},\bar{K}^{n},H^{(n)},Z^{(n)})\xrightarrow[\mathcal{P}]{\;\;\;\;
\;\;\;\;}(X,K,H,Z)\quad}\text{in }{{\mathbb{D}}}\left( {{\mathbb{R}^{+}},%
\mathbb{R}^{4d}}\right)}$,
\item[\rm(ii)]  for every $T\in{\mathbb{R}^{+}}$%
\begin{equation*}
\sup_{t\leq T,\,t\in\pi_{n}}|\bar{X}_{t}^{n}-X_{t}|{\xrightarrow[\mathcal{P}]{%
\;\;\;\; \;\;\;\;}}0\quad\text{and}\quad \sup_{t\leq T,\,t\in\pi_{n}}|\bar{K}%
_{t}^{n}-K_{t}|{\xrightarrow[\mathcal{P}]{\;\;\;\; \;\;\;\;}}0\,,
\end{equation*}
\item[\rm(iii)]  for every $t\in{\mathbb{R}^{+}}$ such that $%
\mathbb{P}(\Delta H_{t}=\Delta Z_{t}=0)=1$ or $t\in\liminf_{n\rightarrow
+\infty}\pi_{n}$,%
\begin{equation*}
\bar{X}_{t}^{n}{\xrightarrow[\mathcal{P}]{\;\;\;\; \;\;\;\;}}X_{t}\quad\text{%
and}\quad\bar{K}_{t}^{n}{\xrightarrow[\mathcal{P}]{\;\;\;\; \;\;\;\;}}%
K_{t}\,,
\end{equation*}
\end{enumerate}
where $(X,K)$ is a strong solution of the SDE (\ref{eq1.1}).
\end{theorem}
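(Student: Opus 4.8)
The plan is to reduce the theorem to the deterministic convergence results for the Skorokhod problem (Theorem \ref{thm2.14} and Corollary \ref{cor2.15}) by first establishing that the discretized driving processes $\bar{Y}^{n}$ converge in probability to $Y = H + \int_{0}^{\cdot}\langle f(X_{s-}),dZ_{s}\rangle$ in the Skorokhod topology $J_{1}$, and then invoking the continuity of the solution map. As in the proof of Theorem \ref{thm4.5}, I would localize: by introducing stopping times $\tau^{c}=\inf\{t:|H_{t}|>c\text{ or }|Z_{t}|>c\}$ it suffices to prove the convergence on each random interval $[0,\tau^{c})$, so we may assume $H$ and $Z$ are bounded, $Z=M+V$ is special with $M$ a square-integrable martingale and $V$ predictable of bounded variation, both with bounded jumps. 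All quantities $[M]$, $\langle M\rangle$, $\updownarrow{\hspace{-0.1cm}V\hspace{-0.1cm}}\updownarrow$ are then locally bounded, which is what powers the $L^{2}$ estimates.

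The key step is to show that $\bar{X}^{n}\xrightarrow[\mathcal{P}]{}X$ and, simultaneously, that the ``input'' processes converge. I would set $\bar{Y}^{n}_{t}=H^{(n)}_{t}+\int_{0}^{t}\langle f(\bar{X}^{n}_{s-}),dZ^{(n)}_{s}\rangle$ and compare it with $Y^{(n)}$, the discretization of the true $Y$. Using Lemma \ref{lem4.3}$(ii)$ applied to the pair $(\bar{X}^{n},\bar{Y}^{n})$ versus $(X^{(n)},Y^{(n)})$ together with the Lipschitz property $\mathrm{(H}_{3}\mathrm{)}$, one gets for any stopping time $\sigma\le\tau^{c}$ a bound of the shape
\begin{equation*}
\mathbb{E}\sup_{t<\sigma}|\bar{X}^{n}_{t}-X_{t}|^{2}\le C\Big(\mathbb{E}\|H^{(n)}-H\|_{\sigma}^{2}+\mathbb{E}\Big\|\int_{0}^{\cdot}\langle f(X_{s-}),d(Z^{(n)}-Z)_{s}\rangle\Big\|_{\sigma}^{2}+L^{2}\mathbb{E}\int_{0}^{\sigma-}|\bar{X}^{n}_{s-}-X_{s-}|^{2}\,d\mu_{s}\Big),
\end{equation*}
where $\mu=[M]+\langle M\rangle+\updownarrow{\hspace{-0.1cm}V\hspace{-0.1cm}}\updownarrow^{2}$; iterating this bound along a finite sequence of stopping times on which the increment of $\mu$ is small (as in the last part of the proof of Theorem \ref{thm4.5}) absorbs the Gronwall term. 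The first two error terms go to zero: $H^{(n)}\to H$ and $Z^{(n)}\to Z$ in $J_{1}$ (and uniformly on intervals of continuity), so $\int_{0}^{\cdot}\langle f(X_{s-}),dZ^{(n)}_{s}\rangle\to\int_{0}^{\cdot}\langle f(X_{s-}),dZ_{s}\rangle$ in probability in $J_{1}$ by the continuity of the stochastic integral with respect to $J_{1}$-convergent integrators (here one uses that the jumps of $Z^{(n)}$ are jumps of $Z$, so no extra mass appears). This yields $\|\bar{X}^{n}-X\|_{T}\xrightarrow[\mathcal{P}]{}0$ after localization, hence also $\bar{K}^{n}=\bar{Y}^{n}-\bar{X}^{n}\to K$, giving part $(i)$ once one also records $H^{(n)}\to H$, $Z^{(n)}\to Z$; the joint $J_{1}$-convergence of the quadruple follows because $\bar{X}^{n},\bar{K}^{n}$ in fact converge uniformly on compacts in probability.

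For $(ii)$, from the joint $J_{1}$-convergence of $(\bar{X}^{n},\bar{X}^{(n)},\bar{K}^{n},\bar{K}^{(n)})\to(X,X,K,K)$ (where $\bar{X}^{(n)},\bar{K}^{(n)}$ are the discretizations) one applies \cite[Chapter VI, Proposition 1.17]{ja-sh/87} exactly as in the proof of Corollary \ref{cor2.15}$(ii)$ to turn $J_{1}$-convergence of a pair with the same limit into $\sup$-norm convergence of the difference, observing $\sup_{t\le T,\,t\in\pi_{n}}|\bar{X}^{n}_{t}-X_{t}|=\|\bar{X}^{n}-\bar{X}^{(n)}\|_{T}$. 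Part $(iii)$ is the standard fact that $J_{1}$-convergence forces convergence of the values at every continuity point of the limit, and $\{\Delta H_{t}=\Delta Z_{t}=0\}$ $\mathbb{P}$-a.s. implies $\Delta X_{t}=\Delta K_{t}=0$ $\mathbb{P}$-a.s. (from Definition \ref{def4.4}$(iv)$ and Remark \ref{rem2.6}$(a)$), while the case $t\in\liminf_{n}\pi_{n}$ is handled directly since then $t$ is eventually a grid point and the discretized evaluation coincides with $\bar{X}^{n}_{t}$.

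The main obstacle is the self-referential nature of the estimate in part $(i)$: $\bar{X}^{n}$ enters its own defining integral, so the Gronwall/localization argument must be carried out carefully with random (not deterministic) time steps governed by the increments of $\mu$, and one must make sure the number of steps needed to cover $[0,T]$ is $\mathbb{P}$-a.s. finite and that the constant $C$ does not blow up along the iteration — this is precisely the place where the special-semimartingale decomposition with bounded jumps and the Metivier--Pellaumail-type inequality behind Lemma \ref{lem4.3}$(ii)$ are essential. A secondary subtlety is justifying $\int_{0}^{\cdot}\langle f(X_{s-}),dZ^{(n)}_{s}\rangle\xrightarrow[\mathcal{P}]{}\int_{0}^{\cdot}\langle f(X_{s-}),dZ_{s}\rangle$ in $J_{1}$ rather than merely in the uniform topology, which requires the good-integrator continuity theorem together with the fact that $Z^{(n)}\to Z$ without creation of spurious jump mass.
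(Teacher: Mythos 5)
There is a genuine gap in your treatment of part $(i)$: the conclusion $\|\bar{X}^{n}-X\|_{T}\xrightarrow[\mathcal{P}]{}0$ (locally uniform convergence in probability) that your Gronwall argument is aimed at is simply false when $H$ or $Z$ has jumps. If $X$ has a fixed jump at a time $t_{0}$ that is not a grid point of $\pi_{n}$, then $\bar{X}^{n}$ produces the corresponding jump only at the next grid point, so $|\bar{X}^{n}_{t_{0}}-X_{t_{0}}|$ stays bounded below by roughly $|\Delta X_{t_{0}}|$; this is exactly why the theorem asserts only $J_{1}$-convergence in $(i)$ and needs the extra hypotheses in $(iii)$. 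The same defect sits inside your key estimate: the term $\mathbb{E}\|H^{(n)}-H\|_{\sigma}^{2}$ does not tend to zero for discontinuous $H$, and in fact Lemma \ref{lem4.3}$(ii)$ cannot even produce that term, because the two drivers you compare have different ``free'' parts ($H^{(n)}$ versus $H$), so their difference must be absorbed into the bounded-variation component and the lemma then charges you the total variation $\updownarrow H^{(n)}-H\updownarrow_{\tau}$, which typically blows up as $n\to\infty$. So the direct one-step comparison of $\bar{X}^{n}$ with $X$ cannot work.

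The paper avoids this by splitting the error into three pieces through two intermediate processes. It sets $X^{n}=\mathcal{SP}^{(1)}(A,\Pi;Y^{(n)})$ with $Y^{(n)}$ the discretization of the true driver $Y=H+\int_{0}^{\cdot}\langle f(X_{s-}),dZ_{s}\rangle$; Corollary \ref{cor2.15} gives $(X^{n},K^{n},H^{(n)},Z^{(n)})\to(X,K,H,Z)$ a.s.\ in $J_{1}$ (purely deterministic, no Gronwall). It then sets $\hat{X}^{n}=\mathcal{SP}^{(1)}(A,\Pi;H^{(n)}+\int_{0}^{\cdot}\langle f(X^{n}_{s-}),dZ^{(n)}_{s}\rangle)$ and uses the Jakubowski--M\'emin--Pages theorem on functional convergence of stochastic integrals together with the $J_{1}$-continuity of the Skorokhod map (Theorem \ref{thm2.14}$(ii)$) to get $(\hat{X}^{n},X^{n})\to(X,X)$ jointly in $J_{1}$, whence $\|\hat{X}^{n}-X^{n}\|_{T}\to0$ in probability. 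The only genuinely self-referential Gronwall step is $\|\bar{X}^{n}-\hat{X}^{n}\|_{T}\to0$, and there both drivers share the same $H^{(n)}$ and the same integrator $Z^{(n)}$, differing only through the integrands $f(\bar{X}^{n}_{s-})$ versus $f(X^{n}_{s-})$, so Lemma \ref{lem4.3} applies cleanly (this is the step the paper delegates to the proof of (3.8) in \cite{sl-wo/10}). Your parts $(ii)$ and $(iii)$ are correct and coincide with the paper's argument, but they rest on the $J_{1}$-statement of $(i)$, which your proposal does not actually establish.
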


\begin{proof} (i) Set
$Y_{t}=H_{t}+\int_{0}^{t}\langle f(X_{s-}),Z_{s}\rangle$, $t\in{\mathbb{R}^{+}}$
and $Y_{t}^{(n)}=Y_{t_{nk}}\,$ for $t\in\lbrack t_{nk},t_{n,k+1})$, $k\in{%
\mathbb{N}}$, $n\in{\mathbb{N}}$. Let
$(X^{n},K^{n})=\mathcal{SP}(A,\Pi;Y^{(n)})$, $n\in{\mathbb{N}}$.
By Remark \ref{rem2.6}(d) and  arguments from the proof of
Proposition \ref{prop1},
\begin{equation}
(X^{n},K^{n},H^{(n)},Z^{(n)})\longrightarrow(X,K,H,Z),~~\mathbb{P}\text{%
-a.s. in }{\mathbb{D}}({\mathbb{R}^{+},}\mathbb{R}^{4d}).
\label{eq4.3}
\end{equation}
Let $(\hat{X}^{n},\hat{K}^{n})=\mathcal{SP}(A,\Pi;\hat{Y}^{n})$,
where%
\begin{equation*}
\hat{Y}_{t}^{n}=H_{t}^{(n)}+\int_{0}^{t}\langle
f(X_{s-}^{n}),dZ_{s}^{(n)}\rangle,\hspace{0.2cm}t\in{\mathbb{R}^{+}},~n\in{%
\mathbb{N}}.
\end{equation*}
By (\ref{eq4.3}) and the theorem on the functional convergence of
stochastic integrals (see, e.g., \cite[Theorem 2.11]{ja-me-pa/89}), \[
(\hat{Y}^{n},X^{n},K^n,H^{(n)},Z^{(n)}){\xrightarrow[\mathcal{P}]{\;\;\;\;
\;\;\;\;}}(Y,X,K,H,Z)\text{~in }{\mathbb{D}}({\mathbb{R}^{+},}\mathbb{R}^{5d}%
\mathbb{)}.\]
 Therefore using once again Remark \ref{rem2.6}(d) we
get
\begin{equation}
(\hat{X}^{n},\hat{K}^{n},X^{n},K^n,H^{(n)},Z^{(n)}){\xrightarrow[\mathcal{P}]{\;%
\;\;\; \;\;\;\;}}(X,K,X,K,H,Z)\text{~in }{\mathbb{D}}({\mathbb{R}^{+},}\mathbb{%
R}^{6d}\mathbb{)},   \label{eq4.4}
\end{equation}
which implies in particular  that
$\|X^{n}-\hat{X}^{n}\|_{T}\xrightarrow[\mathcal{P}]{\;\;\;\;\;}0$
and
$\|K^{n}-\hat{K}^{n}\|_{T}\xrightarrow[\mathcal{P}]{\;\;\;\;\;}0$
for $T\in{\mathbb{R}^{+}}$. In order to complete the proof of (i)
it is sufficient to show that for any $T\in{\mathbb{R}^{+}}$,%
\begin{equation}
\|\bar{X}^{n}-\hat{X}^{n}\|_{T}{\xrightarrow[\mathcal{P}]{\;\;\;\;
\;\;\;\;}}0\quad\text{and}\quad\|\bar {K}^{n}-\hat{K}^{n}\|_{T}{%
\xrightarrow[\mathcal{P}]{\;\;\;\; \;\;\;\;}}0.   \label{eq4.5}
\end{equation}
Under  assumption (H3*)
the proof of (\ref{eq4.5}) runs as the proof of (3.8) in \cite[Theorem 3.5]{sl-wo/10}. To prove the general case we set
$\gamma_{N}=\inf\{t:|X_{t}|>N\}$, $N\in{\mathbb{N}%
}$. The arguments used previously show the convergence of
approximating sequences on the sets $\{T\leq\gamma_{N}\}$. Since
$\gamma_{N}\nearrow+\infty$, $\mathbb{P}$-a.s., the result
follows.

(ii) Set $X_{t}^{(n)}=X_{t_{nk}}\,$, $K_{t}^{(n)}=K_{t_{nk}}$ for
$t\in\lbrack t_{nk},t_{n,k+1})$, $k\in{%
\mathbb{N}}$, $n\in{\mathbb{N}}$. From (i) we deduce that%
\begin{equation*}
(\bar X^{n},X^{(n)},\bar K^{n},K^{(n)})\xrightarrow[\mathcal{P}]{\;\;\;\;\;} (X,X,K,K)\quad\text{in }{\mathbb{%
D}}({\mathbb{R}^{+}},{\mathbb{R}}^{4}).
\end{equation*}
By the above  and \cite[Chapter VI Proposition 1.17]{ja-sh/87},
\begin{equation*}
\sup_{t\leq
T,t\in\pi_{n}}|X_{t}^{n}-X_{t}|=\|\bar X^{n}-X^{(n)}\|_{T} \xrightarrow[\mathcal{P}]{\;\;\;\;\;}  0,\,T%
\in{\mathbb{R}^{+}}
\end{equation*}
and%
\begin{equation*}
\sup_{t\leq T,\,t\in\pi_{n}}|K_{t}^{n}-K_{t}|=\|\bar{K}^{n}-K^{(n)}\|_{T}%
\xrightarrow[\mathcal{P}] {\;\;\;\;\;}0,\,T\in{\mathbb{R}^{+}},
\end{equation*}
which completes the proof.

(iii) Follows easily from (i) and (ii).
\end{proof}
\medskip

For $n\in{\mathbb{N}}$ and $z\in{{\mathbb{R}^{d}}}$ set
\begin{equation*}
J_{n}(z)=\big(I+\frac{A}{n}\big)^{-1}(z),\,\,\,\,A_{n}(z)=n(z-J_{n}(z)).
\end{equation*}
($A_{n}$ is called the Yosida approximation of the operator $A$).

\begin{remark}{\rm
\label{rem3.1}It is well known (see, e.g., \cite{br/73}) that
$A_{n}$ is a maximal monotone operator such that for all
$z,z^{\prime}\in\mathbb{R}^{d}$ and $n\in\mathbb{N}$,
\begin{enumerate}
\item[(a)]
$|J_{n}(z)-J_{n}(z^{\prime})|\leq|z-z^{\prime}|$,
\item[(b)] $|A_{n}(z)-A_{n}(z^{\prime})|\leq n|z-z^{\prime}|$,
\item[(c)] $\lim_{n\rightarrow\infty}J_{n}(z)=\Pi_{\overline{D(A)}}(z)$,
\item[(d)] $\displaystyle\langle z-z^{\prime},A_{n}(z)-A_{n}(z^{\prime})\rangle
\geq\frac{1}{n}(|A_{n}(z)|^{2}+|A_{n}(z^{\prime})|^2-2\langle
A_{n}(z),A_{n}(z^{\prime})\rangle)\geq0$.
\end{enumerate} }
\end{remark}

Since $A_{n}$ is Lipschitz continuous,  there is a
unique solution $X^{n}=\mathcal{SP}^{\left( 1\right) }(A_{n},\Pi _{\overline{%
\mathrm{D}\left( A_{n}\right) }};Y)$. We  call $X^{n}$ the
solution of
the Yosida problem and denote it  by $X^{n}=\mathcal{YP}%
(A_{n};Y)$), $n\in{\mathbb{N}}$. We remark that in fact, $\mathcal{YP%
}(A_{n};Y)=\mathcal{SP}^{\left( 1\right) }(A_{n};Y)$, because the domain of $%
A_{n}$ is $\mathbb{R}^{d}$ and the generalized projection $\Pi_{\overline{%
\mathrm{D}\left( A_{n}\right) }}$ becomes the identity.

\begin{lemma}
\label{lem4.7}Assume (H1), (H2). Let $Y,\hat{Y}$ be two processes admitting decompositions (\ref%
{eq4.1}) and  let $X^{n}=\mathcal{YP}(A_{n};Y)$, $\hat{X}^{n}=\mathcal{YP}%
(A_{n};\hat{Y})$, $n\in\mathbb{N}$. Then for any $p\in
{\mathbb{N}}$ there exists a constant $C_{p}>0$ such that for any
stopping time $\tau$ and $n\in\mathbb{N}$,%
\begin{enumerate}
\item[\rm(i)] $\mathbb{E}{\|X^{n}-\hat{X}^{n}\|_{\tau}^{2p}
\leq C_{p}\mathbb{E}}\big( {|Y_{0}-\hat{Y}_{0}|^{2p}+[M-\hat {M}%
]_{\tau}^{p}+\updownarrow{\hspace{-0.1cm}V-\hat{V}\hspace{-0.1cm}}\updownarrow_{\tau}^{2p}}\big) $,
\item[\rm(ii)]$\!\!\!\!\mathbb{E}{\|X^{n}-\hat {X}^{n}\|^{2p}_{\tau-}\leq C_{p}\mathbb{E}}\big({|Y_{0}-\hat{Y}_{0}|^{2p}+[M-%
\hat{M}]_{\tau-}^{p}+\langle M-\hat{M}\rangle_{\tau-}^{p}+\updownarrow{\hspace{-0.1cm}V-\hat{V}\hspace{-0.1cm}}\updownarrow%
_{\tau-}^{2p}}\big)$.
\end{enumerate}
\end{lemma}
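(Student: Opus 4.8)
The plan is to transcribe the proof of Lemma~\ref{lem4.3} almost word for word, the only change being that the monotonicity of the Skorokhod reflection term (Lemma~\ref{lem2.7}) is replaced by the monotonicity of the Yosida operator $A_n$ (Remark~\ref{rem3.1}--$(d)$), and that the bounded variation processes involved are now continuous.

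First I would record the structure of the approximating equation: since $A_n$ is single valued with $\mathrm{D}(A_n)=\mathbb{R}^d$, we have $X^n=\mathcal{YP}(A_n;Y)$ if and only if $K^n_t:=Y_t-X^n_t=\int_0^t A_n(X^n_s)\,ds$, which is a \emph{continuous} process of locally bounded variation with $K^n_0=0$; likewise $\hat K^n_t=\int_0^t A_n(\hat X^n_s)\,ds$. Because $K^n,\hat K^n$ are continuous, $\int_0^t\langle X^n_{s-}-\hat X^n_{s-},dK^n_s-d\hat K^n_s\rangle=\int_0^t\langle X^n_s-\hat X^n_s,dK^n_s-d\hat K^n_s\rangle$, and by Remark~\ref{rem3.1}--$(d)$,
\[
\int_s^t\langle X^n_u-\hat X^n_u,dK^n_u-d\hat K^n_u\rangle=\int_s^t\langle X^n_u-\hat X^n_u,A_n(X^n_u)-A_n(\hat X^n_u)\rangle\,du\ \ge\ 0,\qquad 0\le s<t .
\]
This is the analogue of Lemma~\ref{lem2.7}--$(i)$; no jump correction term appears precisely because $K^n,\hat K^n$ are continuous. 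Then, applying It\^o's formula to $|X^n_t-\hat X^n_t|^2$, writing $X^n-\hat X^n=(Y_0-\hat Y_0)+(M-\hat M)+(V-\hat V)-(K^n-\hat K^n)$ and using that $K^n,\hat K^n$ contribute nothing to the quadratic variation, the inequality above yields exactly the intermediate estimate obtained in the proof of Lemma~\ref{lem4.3}:
\[
|X^n_t-\hat X^n_t|^2\le |Y_0-\hat Y_0|^2+[Y-\hat Y]_t+2\int_0^t\langle X^n_{s-}-\hat X^n_{s-},dY_s-d\hat Y_s\rangle .
\]

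From this point on the argument is identical to the proof of Lemma~\ref{lem4.3}, with $X,\hat X,K,\hat K$ replaced by $X^n,\hat X^n,K^n,\hat K^n$: one splits $dY-d\hat Y=d(M-\hat M)+d(V-\hat V)$, bounds $[Y-\hat Y]_t\le 2[M-\hat M]_t+2\updownarrow\! V-\hat V\!\updownarrow_t^{2}$, raises to the power $p$, takes $\sup_{t\le\tau}$ and expectations, applies the Burkholder--Davis--Gundy inequality to the local martingale integral together with the pathwise bound $|\int_0^\tau\langle X^n_{s-}-\hat X^n_{s-},d(V-\hat V)_s\rangle|\le \sup_{s\le\tau}|X^n_{s-}-\hat X^n_{s-}|\,\updownarrow\! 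V-\hat V\!\updownarrow_\tau$ to the bounded variation integral, and concludes with Young's and Gronwall's inequalities (localizing $M,\hat M,V,\hat V$ beforehand and using Fatou to remove the localization); this gives $(i)$. For $(ii)$ one replaces Burkholder--Davis--Gundy by the M\'etivier--Pellaumail inequality, exactly as in the proof of Lemma~\ref{lem4.3}--$(ii)$, which is what produces the extra $\langle M-\hat M\rangle_{\tau-}^{p}$ term. The only point that deserves emphasis --- and the sole place where one must be slightly careful --- is that the resulting constant $C_p$ can be chosen \emph{independent of $n$}: the Lipschitz constant of $A_n$ (which equals $n$ and blows up) never enters the estimates, since the only property of $A_n$ used is its monotonicity, via the sign of $\int_s^t\langle X^n_u-\hat X^n_u,A_n(X^n_u)-A_n(\hat X^n_u)\rangle\,du$. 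This uniformity in $n$ is exactly what will be needed when passing to the limit $n\to\infty$ in the Yosida scheme; apart from it, there is no genuine obstacle.
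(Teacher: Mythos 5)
Your proposal is correct and follows essentially the same route as the paper: the paper simply invokes Lemma~\ref{lem2.7}--$(ii)$ (applicable because $\mathcal{YP}(A_n;Y)=\mathcal{SP}^{(1)}(A_n;Y)$, the domain of $A_n$ being all of $\mathbb{R}^d$) to get the same starting inequality that you re-derive by hand from the monotonicity of $A_n$ and It\^o's formula, and then states that the rest runs as in the proof of Lemma~\ref{lem4.3}. Your explicit remark that the constant is independent of $n$ because only the sign of $\int\langle X^n-\hat X^n,A_n(X^n)-A_n(\hat X^n)\rangle\,ds$ is used (never the Lipschitz constant of $A_n$) is the right point to emphasize and is implicit in the paper's argument.
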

\begin{proof}
Set $K^{n}=Y-X^{n},\,\hat{K}^{n}=\hat{Y}-\hat{X}^{n}$, $%
n\in\mathbb{N}$. By \cite[Lemma 29]{ma-ra-sl/13},
\begin{equation*}
|X_{t}^{n}-\hat{X}_{t}^{n}|^{2}\leq|Y_{t}-\hat{Y}_{t}|^{2}-2\int_{0}^{t}%
\langle Y_{t}-\hat{Y}_{t}-Y_{s}+\hat{Y}_{s},dK_{s}^{n}-d\hat{K}%
_{s}^{n}\rangle,\hspace{0.2cm}t\in{\mathbb{R}^{+}}.
\end{equation*}
The rest of the proof runs as  the proof of Lemma \ref{lem4.3}.
\end{proof}

\begin{theorem}
\label{thm4.8} Assume (H1)--(H4) and denote by  $X^{n}$  the
solution of (\ref{eq1.3}).
\begin{enumerate}
\item[\rm(i)]
For any stopping time $\tau$ such that $\mathbb{P%
}(\tau<+\infty)=1,$%
\begin{equation*}
X_{\tau}^{n}{\xrightarrow[\mathcal{P}]{\;\;\;\;
\;\;\;\;}}\bar{X}_{\tau }=X_{\tau-}+\Delta H_{\tau}+\langle
f(X_{\tau-}),\Delta Z_{\tau}\rangle.
\end{equation*}
In particular, $X_{\tau}^{n}\xrightarrow[\mathcal{P}]{\;\;\;\;\;}%
X_{\tau }$ provided that $\mathbb{P}(\Delta H_{\tau}=\Delta
Z_{\tau}=0)=1$.
\item[\rm(ii)]
For any $T\in{\mathbb{R}^{+}}$,%
\begin{equation*}
\|J_{n}(X^{n})-X\|_{T}{\xrightarrow[\mathcal{P}]{\;\;\;\;
\;\;\;\;}}0,
\end{equation*}
\end{enumerate}
where $(X,K)$ is a strong solution of the MSDE (\ref{eq1.1}) with $\Pi=\Pi _{%
\overline{\mathrm{D}(A)}}$ .
\end{theorem}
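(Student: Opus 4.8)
The plan is to reduce everything to the discrete‐approximation result of Theorem \ref{thm4.6} together with the Yosida stability estimate of Lemma \ref{lem4.7}. First I would localize exactly as in the proof of Theorem \ref{thm4.5}: by stopping $H$ and $Z$ at $\tau^c=\inf\{t:|H_t|>c\text{ or }|Z_t|>c\}$ and passing to the limit over the increasing sequence $\tau^c\uparrow+\infty$, it suffices to treat the case where $H$ and $Z$ are bounded, so that $Z=M+V$ is a special semimartingale with $M$ locally square–integrable, $|\Delta M|\le 4c$, $|\Delta V|\le 2c$. For part (ii) I would then introduce, for the same partitions $\pi_n$ with $\|\pi_n\|\to 0$ used in Theorem \ref{thm4.6}, the Yosida–Euler scheme $\bar X^{n}$ obtained by replacing $\mathcal{SP}^{(1)}(A,\cdot)$ (equivalently the projection $\Pi$) by $\mathcal{YP}(A_n;\cdot)=\mathcal{SP}^{(1)}(A_n;\cdot)$ in the definition of $\bar X^{n}$ preceding Theorem \ref{thm4.6}. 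The heart of the argument is a double limit, $n\to\infty$ and $\|\pi_n\|\to 0$, which I would arrange by a diagonal choice $\pi_{m(n)}$.

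The key steps, in order. (1) Using Remark \ref{rem3.1}(c), $J_n(z)\to\Pi_{\overline{\mathrm{D}(A)}}(z)$ pointwise, together with Lemma \ref{lem2.7}(i)–(ii), one shows that for fixed $n$ the Yosida–Euler scheme $\bar X^{n,(m)}$ (partition $\pi_m$) converges in probability, as $m\to\infty$, in the Skorokhod topology to the Yosida solution $X^n=\mathcal{YP}(A_n;Y)$ of \eqref{eq1.3}; this is the functional-convergence-of-stochastic-integrals argument (\cite[Theorem 2.11]{ja-me-pa/89}) exactly as in the proof of Theorem \ref{thm4.6}(i), since $A_n$ is globally Lipschitz so no monotonicity is even needed here. (2) The uniform control: by Lemma \ref{lem4.7}, applied with $\hat Y\equiv H_0$ (whose Yosida solution is explicit and bounded in $\mathcal S^2$), one gets an a priori bound $\mathbb E\sup_{t<\tau_k}|X^n_t|^{2}\le C_k$ uniform in $n$ on each localization interval $[\tau_{k-1},\tau_k)$; combined with Remark \ref{rem3.1}(d), $\langle z-z',A_n(z)-A_n(z')\rangle\ge \tfrac1n|A_n(z)-A_n(z')|^2$, applied along the Skorokhod-problem inequality this yields that the "reflecting part" $K^n:=Y-J_n(X^n)+\frac1n\int_0^{\cdot}A_n(X^n_s)\,ds$ has variation bounded in probability on compacts. (3) Tightness and identification: from (2) the family $\{(J_n(X^n),K^n,H,Z)\}_n$ is tight in $\mathbb D(\mathbb R^+,\mathbb R^{4d})$ (here Lemma \ref{lem4.7}(i)–(ii) is used to get modulus-of-continuity estimates of the correct Aldous/Metivier–Pellaumail type on each $[\tau_{k-1},\tau_k)$); any subsequential limit $(\tilde X,\tilde K,H,Z)$ satisfies the stochastic integral equation \eqref{eq1.1}, the limit of $dK^{n,c}$ lies in $A(\tilde X)(dr)$ by maximal monotonicity of $A$ (passing to the limit in $\int_s^t\langle \tilde X^n_r-\alpha,dK^{n,c}_r-\beta\,dr\rangle\ge -\tfrac1{2}\sum|\Delta|^2\to\ge 0$, using Remark \ref{rem3.1}(d) to absorb the $A_n$–terms and \cite[Chapter VI, Proposition 1.17]{ja-sh/87} on the variations), and the jump identity at jump times becomes $\tilde X_t=\Pi_{\overline{\mathrm{D}(A)}}(\tilde X_{t-}+\Delta H_t+\langle f(\tilde X_{t-}),\Delta Z_t\rangle)$ because $J_n$ converges to $\Pi_{\overline{\mathrm D(A)}}$. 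By the uniqueness half of Theorem \ref{thm4.5} with $\Pi=\Pi_{\overline{\mathrm{D}(A)}}$, $\tilde X=X$, so the whole sequence converges, which gives (ii). For part (i), the stopping time $\tau$ satisfies $\mathbb P(\tau<\infty)=1$; evaluating the scheme at partition points straddling $\tau$ and using that $X^n$ jumps to $J_n$ of the candidate value, one passes from (ii) (convergence of $J_n(X^n)$ to $X$, continuous in $J_1$ off jumps) to the pointwise statement $X^n_\tau\to \bar X_\tau=X_{\tau-}+\Delta H_\tau+\langle f(X_{\tau-}),\Delta Z_\tau\rangle$; the case $\mathbb P(\Delta H_\tau=\Delta Z_\tau=0)=1$ then gives $X^n_\tau\xrightarrow[\mathcal P]{}X_\tau$.

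The main obstacle I anticipate is step (3): passing to the limit in the monotonicity inequality while simultaneously letting $n\to\infty$, since the corrector terms $\frac1n\int A_n(X^n)\,ds$ and the quadratic jump sums $\sum|\Delta K^n|^2$ must be shown to vanish or stay nonnegative in the limit. The nonexpansiveness estimate $|\Delta K^n_t|\le 2|\Delta Y_t|$ from Remark \ref{rem2.6}(a) is not directly available for the Yosida scheme, so one must instead use $|\Delta(Y-J_n(X^n))_t|\le 2|\Delta Y_t|$ (since $J_n$ is nonexpansive and fixes $J_n(X^n_{t-})$ is not in $\overline{\mathrm D(A)}$ in general — this point needs care) together with Remark \ref{rem3.1}(d) to show the extra term $\tfrac1n\int_0^t A_n(X^n_s)\,ds$ is bounded in $L^2$ and hence, after extracting a weakly convergent subsequence, its limit is absorbed into $dK^c\in A(X)(dr)$. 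Controlling that corrector term uniformly is precisely where boundedness of $H,Z$ (hence of $M$, $V$) and the a priori bound from Lemma \ref{lem4.7} are essential, and it is the only place where the argument is genuinely more delicate than the corresponding step in Theorem \ref{thm4.6}.
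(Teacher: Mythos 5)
Your proposal takes a genuinely different route from the paper — a tightness/identification (compactness) argument — and it has real gaps at exactly the points you flag as "delicate." The paper's proof is much more direct: it introduces the auxiliary process $\hat{X}^{n}=\mathcal{YP}(A_{n};Y)$ where $Y_{t}=H_{t}+\int_{0}^{t}\langle f(X_{s-}),dZ_{s}\rangle$ is built from the \emph{already known} solution $X$ of (\ref{eq1.1}) (so the coefficient is frozen), invokes the deterministic Yosida convergence results of the companion paper \cite[Theorems 29--30]{ma-ra-sl/13} pathwise to get $\hat{X}^{n}_{\tau}\rightarrow\bar{X}_{\tau}$, $\hat{X}^{n}_{t-}\rightarrow X_{t-}$ and $||J_{n}(\hat{X}^{n})-X||_{T}\rightarrow0$, and then closes the loop with a single Gronwall estimate (Lemma \ref{lem4.7}--$(ii)$, after localization by $\tau_{n}^{b}$) showing $||\hat{X}^{n}-X^{n}||_{T}\xrightarrow[\mathcal{P}]{\;\;\;\;\;}0$. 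Both (i) and (ii) then follow by the triangle inequality and non-expansiveness of $J_{n}$. Your scheme never uses these deterministic theorems and instead tries to re-derive their content stochastically; the identification of subsequential limits (absorbing $\frac{1}{n}\int A_{n}(X^{n})\,ds$, passing to the limit in the monotonicity inequality, recovering the jump condition) is precisely the content of \cite[Theorem 29]{ma-ra-sl/13}, and you leave it as an acknowledged obstacle rather than resolving it. Note also that the paper explicitly warns that $X^{n}$ need not converge in $J_{1}$, which makes a tightness-in-$J_{1}$ strategy for the scheme itself hazardous.

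There are two concrete errors beyond the unfinished identification step. First, your claim that "$X^{n}$ jumps to $J_{n}$ of the candidate value" is false: from (\ref{eq1.3}), $\Delta X^{n}_{t}=\Delta H_{t}+\langle f(X^{n}_{t-}),\Delta Z_{t}\rangle$ with no resolvent applied — the jump is \emph{unprojected}, and it is the subsequent strong drift $-A_{n}(X^{n})$ that pulls the path back toward $\overline{\mathrm{D}(A)}$. Second, and consequently, part $(i)$ cannot be deduced from part $(ii)$ as you propose: at a jump time $\tau$ of $H$ or $Z$, the limit $\bar{X}_{\tau}=X_{\tau-}+\Delta H_{\tau}+\langle f(X_{\tau-}),\Delta Z_{\tau}\rangle$ generally lies \emph{outside} $\overline{\mathrm{D}(A)}$, whereas $(ii)$ only gives $J_{n}(X^{n}_{\tau})\rightarrow X_{\tau}=\Pi_{\overline{\mathrm{D}(A)}}(\bar{X}_{\tau})$; since $J_{n}$ is not injectively invertible in the limit, knowing $J_{n}(X^{n}_{\tau})\rightarrow X_{\tau}$ tells you nothing about where $X^{n}_{\tau}$ itself goes. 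The paper's order of argument is the reverse of yours: $(i)$ comes first, from (\ref{eq4.6}) and the comparison estimate (\ref{eq4.8}), and $(ii)$ is then obtained from $||J_{n}(X^{n})-X||_{T}\leq||J_{n}(\hat{X}^{n})-X||_{T}+||\hat{X}^{n}-X^{n}||_{T}$.
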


\begin{proof} (i) Set
$Y_{t}=H_{t}+\int_{0}^{t}\langle f(X_{s-}),dZ_{s}\rangle$, $t\in{\mathbb{R}^{+}
}$.
Let $\hat{X}^{n}=\mathcal{YP}(A_{n};Y)$, $n\in\mathbb{N}$. By
\cite[Theorem 31(j),(jj)]{ma-ra-sl/13},
for any stopping time $\tau$ such that $\mathbb{P}(\tau<+\infty)=1$,%
\begin{equation}
\hat{X}_{\tau}^{n}\longrightarrow\bar{X}%
_{\tau }=X_{\tau-}+\Delta H_{\tau}+\langle f(X_{\tau-}),\Delta
Z_{\tau}\rangle, \quad\mathbb{P}\text{%
-a.s.},  \label{eq4.6}
\end{equation}
and for every $t\in{%
\mathbb{R}^{+}}$,
\begin{equation}
\hat{X}_{t-}^{n}\longrightarrow X_{t-},\quad \quad\mathbb{P}\text{%
-a.s.  }   \label{eq4.7}
\end{equation}
By (\ref{eq4.6}), to prove (i) it suffices to show that
\begin{equation}
\|\hat{X}^{n}-X^{n}\|_{T}{\xrightarrow[\mathcal{P}]{\;\;\;\; \;\;\;\;}}%
0,~T\in{\mathbb{R}^{+}}.   \label{eq4.8}
\end{equation}
Without loss of generality we may assume
that there is a constant $c>0$ such that $|Z_{t}|\leq c$. Then $Z$
is a special semimartingale admitting the decomposition $Z=M+V$,
where $M$ is a local square-integrable martingale such that
$M_0=0$ and $|\Delta M|\leq4c$ and $V$ is a predictable process of
locally bounded variation such that  $|\Delta V|\leq2c$ and
$V_{0}=0$. For $b>0$ set
\begin{equation*}
\tau_{n}^{b}=\inf\{t>0:\max([M]_{t},\langle M\rangle_{t},
\updownarrow{\hspace{-0.1cm}V\hspace{-0.1cm}}\updownarrow_{t},|\hat{X}%
_{t}^{n}|,|X_{t}|)>b\},\quad n\in\N.
\end{equation*}
By \cite[Theorem 32(j)]{ma-ra-sl/13}, for any $T\in{\mathbb{R}%
^{+}}$, the family $\{\|\hat{X}^{n}\|_{T}\}$ is bounded in
probability,
which implies that%
\begin{equation}
\lim_{b\rightarrow\infty}\limsup_{n\rightarrow\infty}\mathbb{P}%
(\tau_{n}^{b}\leq T)=0,~T\in{\mathbb{R}^{+}}.   \label{eq4.9}
\end{equation}
As in the proof of Theorem \ref{thm4.6}, without loss of
generality we can replace the assumptions (H3), (H4) by  (H3*).
Then by Lemma \ref{lem4.7}(ii) with $p=1$, for any stopping time
$\sigma_{n}$ we have
\begin{align*}
\mathbb{E}&\sup_{t<\sigma_{n}\wedge\tau_{n}^{b}}|X_{t}^{n}-\hat {
X}_{t}^{n}|^{2}\leq
C_{1}\Big[\mathbb{E}\int_{0}^{(\sigma_{n}\wedge\tau
_{n}^{b})-}\|f(X_{s-}^{n})-f(X_{s-})\|^{2}d([M]_{s}+\langle
M\rangle_{s})\\
&\qquad\qquad\qquad\qquad\qquad\qquad\quad+b\mathbb{E}\int_{0}^{(\sigma_{n}\wedge%
\tau_{n}^{b})-}\|f(X_{s-}^{n})-f(X_{s-})\|^{2}
d\updownarrow{\hspace{-0.1cm}V\hspace{-0.1cm}}\updownarrow_{s}\Big]\\
&\quad\leq2C_{1}L^{2}\Big[\mathbb{E}\int_{0}^{(\sigma
_{n}\wedge\tau_{n}^{b})-}\sup_{u\leq s}|X_{u-}^{n}-\hat{X}%
_{u-}^{n}|^{2}d([M]_{s}+<M>_{s}+b\updownarrow{\hspace{-0.1cm}V\hspace{-0.1cm}}\updownarrow_{s})+\epsilon_{n}\Big],%
\end{align*}
where
$\epsilon_{n}=\mathbb{E}\int_{0}^{(\sigma_{n}\wedge\tau_{n}^{b})-}|\hat{X}%
_{s-}^{n}-X_{s-}|^{2}\,d([M]_{s}+<M>_{s}+b\updownarrow{\hspace{-0.1cm}V\hspace{-0.1cm}}\updownarrow_{s}),~n\in\mathbb{N}$.
Therefore applying  Gronwall's lemma we obtain
\begin{equation*}
\mathbb{E}\sup_{t<\tau_{n}^{b}}|X_{t}^{n}-\hat{X}_{t}^{n}|^{2}%
\leq2C_{1}L^{2}\epsilon_{n}\exp\{2C_{1}L^{2}(2b+b^{2})\}.
\end{equation*}
Since (\ref{eq4.7}) implies that $\epsilon_{n}\rightarrow0$, (\ref{eq4.8}%
) follows from (\ref{eq4.9}). This completes the proof of
(i).\smallskip\\
(ii) By \cite[Theorem 31 (jj)]{ma-ra-sl/13}, for every
$T\in{\mathbb{R}^{+}}$,
\begin{equation}
\label{eq3.14} \|J_{n}(\hat{X}^{n})-X\|_{T}\longrightarrow 0,\quad
\mathbb{P}\text{-a.s.  }
\end{equation}
By the Lipschitz property of the operator $J_{n}$,
\begin{equation*}
\|J_{n}(X^{n})-X\|_{T}\leq\|J_{n}(\hat{X}^{n})-X\|_{T}
+\|\hat{X}^{n}-X^{n}\|_{T}\;,\quad
T\in{\mathbb{R}^{+}}.
\end{equation*}
When combined  with (\ref{eq4.8}),  (\ref{eq3.14}) this proves (ii).
\end{proof}
\medskip

Let  $Y$ be an $(\mathcal{F}_{t})$-adapted c\`{a}dl\`{a}g process
such that $Y_{0}\in\overline{\mathrm{D}\left( A\right) }$. Note
that for every $n\in{\mathbb{N}}$ there exists a unique
$(\mathcal{F}_{t})$-adapted c\`{a}dl\`{a}g process $X^n$ satisfying
the  equation
\begin{equation}\label{eqf}X_{t}^{n}
+K_{t}^{n}=Y_{t},\quad t\in{\mathbb{R}^{+}},
\end{equation}
where $
K_{t}^{n}=\int_{0}^{t}A_{n}(X_{s}^{n})ds-\sum_{s\leq t}\big[%
X_{s-}^{n}+\Delta Y_{s}-\Pi(X_{s-}^{n}+\Delta Y_{s})\big]\;\mbox{\bf 1}%
_{\{|\Delta Y_{s}|>1/n\}}$,  $t\in{\mathbb{R}^{+}}$. Indeed, if we
set $\sigma_{0}=0$ and $\sigma_{k+1}=\inf\{t>\sigma_{k}:|\Delta
Y_{t}|>1/n\}$, $k\in{\mathbb{N}}$, then on every stochastic
interval $[\sigma_{k},\sigma_{k+1})$, $X^{n}$ is a solution of the
equation  of the form
\[
X_{t}^{n}=\Pi(X_{\sigma_{k}-}^{n}+\Delta
Y_{\sigma_{k}})+Y_{t}-Y_{\sigma_{k}}
-\int_{\sigma_{k}}^{t}A_{n}(X_{s}^{n})ds,\quad
t\in[\sigma_{k},\sigma_{k+1})
\]
with Lipschitz continuous $A_n$. Since $\mathbb{P}$-a.s. there
exists only a finite number of jumps of $Y$ greater than $1/n$,
the process $X^n$ is well defined.
In the sequel we  will use the notation $X^{n}=\mathcal{YP}%
(A_{n},\Pi;Y)$, $n\in{\mathbb{N}}$.

Let $Y,\hat{Y}$ be processes admitting decompositions
(\ref{eq4.1}). Using  \cite[Proposition 35]{ma-ra-sl/13}  and
arguing as in the proof of Lemma \ref{lem4.7} we obtain the
following result.

\begin{lemma}Assume (H1), (H2).
\label{lem4.9}Let $X^{n}=\mathcal{YP}(A_{n},\Pi;Y)$ and $\hat{X}%
^{n}=\mathcal{YP}(A_{n},\Pi,\hat{Y})$, $n\in\mathbb{N}$. Then for every
$p\in{\mathbb{N}}$ there exists a constant $C_{p}>0$ such that for
any stopping time $%
\tau$ and  $n\in\mathbb{N}$ the estimates (i), (ii) from Lemma \ref%
{lem4.7} hold true.
\end{lemma}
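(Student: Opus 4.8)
The plan is to reduce Lemma \ref{lem4.9} to Lemma \ref{lem4.7} by localizing on the stochastic intervals $[\sigma_k,\sigma_{k+1})$ determined by the ``large jump'' times $\sigma_k=\inf\{t>\sigma_{k-1}:\max(|\Delta H_t|,|\Delta Z_t|)>1/n\}$, on which the process $X^n=\mathcal{YP}(A_n,\Pi;Y)$ satisfies a genuine Yosida-type equation with a Lipschitz drift $A_n$ and no jump correction term. First I would fix $p\in\mathbb{N}^\ast$ and a stopping time $\tau$, and set $K^n=Y-X^n$, $\hat K^n=\hat Y-\hat X^n$. Because on each $[\sigma_k,\sigma_{k+1})$ the increment $dK^{n,c}_r\in A_n(X^n_r)(dr)$ (in the integral sense of Definition \ref{def4.4}), and because $A_n$ is monotone with $\mathrm{D}(A_n)=\mathbb{R}^d$ so that the associated generalized projection is the identity, the pair $(X^n,K^n)$ is a solution of the Skorokhod problem $\mathcal{SP}(A_n,\Pi;Y)$ in the sense of Definition \ref{def2.5}; the same holds for $(\hat X^n,\hat K^n)$ relative to $\hat Y$. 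Hence Lemma \ref{lem2.7}--$(ii)$ applies verbatim and gives
\begin{equation*}
|X^n_t-\hat X^n_t|^2\le |Y_t-\hat Y_t|^2-2\int_0^t\langle Y_t-\hat Y_t-Y_s+\hat Y_s,\,dK^n_s-d\hat K^n_s\rangle,\quad t\in\mathbb{R}^+ .
\end{equation*}

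From this starting inequality the argument is exactly the one carried out in the proof of Lemma \ref{lem4.3}: I would apply the relation \eqref{connection betw int} between the two types of stochastic integral, integrate by parts via \eqref{int by parts}, and use the defining equation of $X^n$ (namely $K^n=Y-X^n$) to rewrite the right-hand side as $|Y_0-\hat Y_0|^2+[Y-\hat Y,Y-\hat Y]_t+2\int_0^t\langle X^n_{s-}-\hat X^n_{s-},dY_s-d\hat Y_s\rangle$. Substituting the common decompositions \eqref{eq4.1}, $Y=Y_0+H+M+V$ and $\hat Y=\hat Y_0+H+\hat M+\hat V$, the $H$-terms cancel, so the bracket term reduces to $[M-\hat M,M-\hat M]_t$ and the stochastic integral splits into a local-martingale part driven by $M-\hat M$ and a bounded-variation part driven by $V-\hat V$. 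Taking $2p$-th moments of the supremum over $[0,\tau]$, estimating the martingale part by Burkholder--Davis--Gundy and the finite-variation part by a pathwise bound $\sup_{s\le\tau}|X^n_{s-}-\hat X^n_{s-}|^p\,\updownarrow\hspace{-0.1cm}V-\hat V\hspace{-0.1cm}\updownarrow_\tau^p$, then using Young's inequality to absorb the $\sup$ factor and Gronwall's inequality, yields estimate $(i)$; estimate $(ii)$ follows in the same way but invoking the M\'etivier--Pellaumail inequality instead of BDG, exactly as in the deduction of Lemma \ref{lem4.3}--$(ii)$ from Lemma \ref{lem4.3}--$(i)$. The constant $C_p$ produced this way depends only on $p$ (through the BDG / M\'etivier--Pellaumail constants and the Gronwall/Young bookkeeping) and in particular not on $n$, which is the point of the statement.

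The one place requiring genuine care — the main obstacle — is justifying that Lemma \ref{lem2.7}--$(ii)$ is legitimately applicable to $(X^n,K^n)$, i.e.\ that this pair really is a Skorokhod-problem solution for the operator $A_n$ despite the extra jump-subtraction term $\sum_{s\le t}[X^n_{s-}+\Delta Y^n_s-\Pi(X^n_{s-}+\Delta Y^n_s)]\mathbf{1}_{\{\max(|\Delta H_s|,|\Delta Z_s|)>1/n\}}$ appearing in the definition of $K^n$ in \eqref{eq4.10}. The resolution is that this sum is precisely $\sum_{s\le t}\Delta K^n_s$: at each large-jump time $\sigma_k$ one has $X^n_{\sigma_k}=\Pi(X^n_{\sigma_k-}+\Delta Y^n_{\sigma_k})$ by construction, hence $\Delta K^n_{\sigma_k}=\Delta Y^n_{\sigma_k}-\Delta X^n_{\sigma_k}=X^n_{\sigma_k-}+\Delta Y^n_{\sigma_k}-\Pi(X^n_{\sigma_k-}+\Delta Y^n_{\sigma_k})$, so $K^{n,c}$ is the absolutely continuous part $\int_0^\cdot A_n(X^n_s)\,ds$ and the conditions $(ii)$--$(iii)$ of Definition \ref{def2.5} hold with $A$ replaced by $A_n$ and $\Pi$ unchanged. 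Once this identification is in place, everything else is the routine transcription indicated above, and I would simply write ``Arguing exactly as in the proof of Lemma \ref{lem4.3}, the estimates $(i)$--$(ii)$ follow.''
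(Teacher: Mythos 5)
Your proposal is correct and follows exactly the route the paper takes: the paper's entire proof of Lemma \ref{lem4.9} is the sentence ``Arguing as in the proof of Lemma \ref{lem4.7} we obtain the following result'', i.e.\ apply Lemma \ref{lem2.7}--$(ii)$ to the pairs $(X^{n},K^{n})$, $(\hat{X}^{n},\hat{K}^{n})$ and then repeat the computation from the proof of Lemma \ref{lem4.3} (BDG, resp.\ M\'etivier--Pellaumail, then Young and Gronwall). Your extra verification that the jump-correction sum in (\ref{eq4.10}) is exactly $\sum_{s\leq t}\Delta K_{s}^{n}$, so that $K^{n,c}=\int_{0}^{\cdot}A_{n}(X_{s}^{n})\,ds$ and conditions $(ii)$--$(iii)$ of Definition \ref{def2.5} hold, is the one point the paper leaves implicit and it is right; the only caveat worth recording is that $\Pi$ is not the identity on $\overline{\mathrm{D}(A_{n})}=\mathbb{R}^{d}$, so one is not literally in the setting of Definition \ref{def2.5} for $A_{n}$ --- but the inequality of Lemma \ref{lem2.7}--$(ii)$ uses only the monotonicity of $A_{n}$ on the continuous part and the non-expansiveness of $\Pi$ at the jumps, so it goes through unchanged.
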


In the rest of  Section 3 we consider approximations of
(\ref{eq1.1})  of the form%
\begin{equation}
X_{t}^{n}+K_{t}^{n}=H_{t}+\int_{0}^{t}
\langle f(X_{s-}^{n}),dZ_{s}\rangle,
\label{eq4.10}
\end{equation}
where
\[K_{t}^{n}=\int_{0}^{t}A_{n}(X_{s}^{n})ds-\sum_{s\leq t}\big[%
X_{s-}^{n}+\Delta Y_{s}^{n}-\Pi(X_{s-}^{n}
+\Delta Y_{s}^{n})\big]\;\mbox{\bf 1}%
_{\{\max(|\Delta H_{s}|,|\Delta Z_{s}|>1/n\}}\] and $
Y_{t}^{n}=H_{t}+\int_{0}^{t}\langle f(X_{s-}^{n}),dZ_{s}\rangle$, $t\in{%
\mathbb{R}^{+}}$, $n\in\N$. Note that there exists a unique
$(\mathcal{F}_{t})$-adapted solution of (\ref{eq4.10}). To check
this set  $\gamma_{0}=0$, $
\gamma_{k+1}=\inf\{t>\gamma_{k}:\max(|\Delta H_{t}|,|\Delta
Z_{t}|)>1/n\}$, $k\in{\mathbb{N}}$,  and observe that $X^{n}$
is a solution of the equation%
\begin{equation*}
\begin{array}{r}
\displaystyle X_{t}^{n}=\Pi(X_{\gamma_{k}-}^{n}+\Delta
H_{\gamma_{k}}+\langle f(X_{\gamma_{k}-}^{n}),\Delta
Z_{\gamma_{k}}\rangle)+H_{t}-H_{\gamma_{k}}\medskip \\
\displaystyle+\int_{\gamma_{k}}^{t}\langle f(X_{s-}^{n}),dZ_{s}\rangle
-\int_{\gamma_{k}}^{t}A_{n}(X_{s}^{n})ds%
\end{array}
\end{equation*}
on each  stochastic interval $[\gamma_{k},\gamma_{k+1})$. Since
$f$ satisfies  (H3), (H4)  and  $A_{n}$ is Lipschitz continuous,
it is known that the above equation has a unique strong solution
on $[\gamma_{k},\gamma_{k+1})$ for every $k\in\mathbb{N}$.
Therefore  (\ref{eq4.10})  has  a unique strong solution.

\begin{theorem}
\label{thm4.10} Assume (H1)--(H4) and denote by   $X^{n}$  the
solution of (\ref{eq4.10}). Then
\begin{equation*}
\|X^{n}-X\|_{T}{\xrightarrow[\mathcal{P}]{\;\;\;\; \;\;\;\;}}0,~T\in {%
\mathbb{R}^{+}},
\end{equation*}
where $(X,K)$ is the unique strong solution of the MSDE
(\ref{eq1.1}).
\end{theorem}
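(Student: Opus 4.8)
The plan is to prove Theorem~\ref{thm4.10} by comparing the modified Yosida scheme $X^n = \mathcal{YP}(A_n,\Pi;Y^n)$ with the genuine solution $(X,K)=\mathcal{SP}(A,\Pi;Y)$ of (\ref{eq1.1}), where $Y_t = H_t + \int_0^t\langle f(X_{s-}),dZ_s\rangle$ and $Y^n_t = H_t + \int_0^t\langle f(X^n_{s-}),dZ_s\rangle$. As usual by localization we may assume $|H_t|,|Z_t|\le c$ and write $Z=M+V$ with $M$ a local square-integrable martingale, $|\Delta M|\le 4c$, and $V$ predictable of locally bounded variation, $|\Delta V|\le 2c$. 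Introduce the auxiliary process $\widehat X^n := \mathcal{YP}(A_n,\Pi;Y)$ driven by the \emph{true} $Y$, so that by the triangle inequality
\begin{equation*}
\|X^n - X\|_T \le \|X^n - \widehat X^n\|_T + \|\widehat X^n - X\|_T, \qquad T\in\mathbb{R}^+.
\end{equation*}
The second term is handled by invoking the results cited in the proof of Theorem~\ref{thm4.8} (the convergence results of \cite{ma-ra-sl/13} for the Yosida-type scheme with general non-expanding projection $\Pi$): because $\widehat X^n$ solves the modified Yosida equation with the \emph{fixed} càdlàg input $Y$ and $\Pi$, one gets $\|\widehat X^n - X\|_T\xrightarrow[\mathcal{P}]{\;\;\;}0$ for every $T$, using that the jump-correction term in the definition of $K^n$ exactly reproduces, in the limit, the reflection rule $X_t = \Pi(X_{t-}+\Delta Y_t)$ at jump times with $\max(|\Delta H_t|,|\Delta Z_t|)>1/n$, and that $A_n$ pushes $X^n$ into $\overline{\mathrm{D}(A)}$ at the continuous scale.

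The heart of the argument is the first term, $\|X^n-\widehat X^n\|_T$. Since $X^n$ and $\widehat X^n$ are both of the form $\mathcal{YP}(A_n,\Pi;\cdot)$ but with different drivers ($Y^n$ versus $Y$), I would apply Lemma~\ref{lem4.9} (the estimates $(i)$--$(ii)$ of Lemma~\ref{lem4.7}) with $p=1$ to the difference $Y^n - Y = \int_0^\cdot \langle f(X^n_{s-})-f(X_{s-}),dZ_s\rangle$, whose martingale part has quadratic variation controlled, via $\mathrm{(H}_3\mathrm{)}$, by $L^2\int|X^n_{s-}-X_{s-}|^2\,d[M]_s$, and whose bounded-variation part is dominated by $L\int|X^n_{s-}-X_{s-}|\,d\updownarrow{\hspace{-0.1cm}V\hspace{-0.1cm}}\updownarrow_s$. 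Introducing the stopping times
\begin{equation*}
\tau_n^b = \inf\{t>0:\max([M]_t,\langle M\rangle_t,\updownarrow{\hspace{-0.1cm}V\hspace{-0.1cm}}\updownarrow_t,|X^n_t|,|\widehat X^n_t|,|X_t|)>b\}
\end{equation*}
and using that the families $\{\|X^n\|_T\}$, $\{\|\widehat X^n\|_T\}$ are bounded in probability (again by the $L^2$-boundedness estimate from \cite{ma-ra-sl/13} together with Lemma~\ref{lem4.9}$(i)$), one obtains $\lim_{b\to\infty}\limsup_n\mathbb{P}(\tau_n^b\le T)=0$. On $[0,\tau_n^b)$ the Lemma gives
\begin{equation*}
\mathbb{E}\sup_{t<\sigma_n\wedge\tau_n^b}|X^n_t-\widehat X^n_t|^2 \le C_1 L^2\Big(\int_0^{(\sigma_n\wedge\tau_n^b)-}\sup_{u\le s}|X^n_{u-}-\widehat X^n_{u-}|^2\,d([M]_s+\langle M\rangle_s + b\updownarrow{\hspace{-0.1cm}V\hspace{-0.1cm}}\updownarrow_s) + \delta_n\Big),
\end{equation*}
where $\delta_n = \int_0^{(\sigma_n\wedge\tau_n^b)-}|\widehat X^n_{s-}-X_{s-}|^2\,d([M]_s+\langle M\rangle_s + b\updownarrow{\hspace{-0.1cm}V\hspace{-0.1cm}}\updownarrow_s)$; by the first part $\widehat X^n\to X$ so $\delta_n\to 0$, and Gronwall's lemma (the integrator being bounded by $2b+b^2$ on the localized interval) yields $\mathbb{E}\sup_{t<\tau_n^b}|X^n_t-\widehat X^n_t|^2\le C_1L^2\,\delta_n\exp\{C_1L^2(2b+b^2)\}\to 0$.

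Combining these two estimates and letting first $n\to\infty$ and then $b\to\infty$ gives $\|X^n-X\|_T\xrightarrow[\mathcal{P}]{\;\;\;}0$ for every $T\in\mathbb{R}^+$. The main obstacle I anticipate is the circularity between the two comparison steps: the convergence $\widehat X^n\to X$ relies on the a~priori bound for the Yosida scheme with fixed input, while controlling $X^n-\widehat X^n$ needs the a~priori bound for the scheme with the self-referential input $Y^n$; one must therefore first establish, via Lemma~\ref{lem4.9}$(i)$ applied with $\widehat Y\equiv$ a fixed reference process and a Gronwall argument in the implicit variable $X^n$, that $\sup_n\mathbb{E}\|X^n\|_T^2<\infty$ \emph{before} the localization $\tau_n^b$ can be used effectively. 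The remaining care is that $X^n$ need not converge in the $J_1$ topology (only the values $X^n_t-X_t$ converge uniformly), which is exactly why the estimate is carried out on the deterministic time-grid-free stopped intervals rather than through functional convergence of stochastic integrals. \rule{0.5em}{0.5em}
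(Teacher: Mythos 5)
Your proposal follows essentially the same route as the paper: introduce the auxiliary process $\hat{X}^{n}=\mathcal{YP}(A_{n},\Pi;Y)$ driven by the true $Y$, get $\|\hat{X}^{n}-X\|_{T}\rightarrow 0$ from the deterministic results of \cite{ma-ra-sl/13}, and control $\|X^{n}-\hat{X}^{n}\|_{T}$ by the localized Gronwall argument of Theorem \ref{thm4.8} with Lemma \ref{lem4.9}--$(ii)$ in place of Lemma \ref{lem4.7}--$(ii)$. The circularity you worry about at the end does not actually arise if, as in the paper's proof of (\ref{eq4.8}), you omit $|X_{t}^{n}|$ from the definition of $\tau_{n}^{b}$ — the Gronwall step needs only the cap on $[M]$, $\langle M\rangle$, $\updownarrow V\updownarrow$ and the boundedness of $\hat{X}^{n}$ and $X$, not an a priori bound on $X^{n}$.
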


\begin{proof}
Let%
\begin{equation*}
\hat{X}^{n}=\mathcal{YP}(A_{n},\Pi;Y),\mathbb{\;}n\in\mathbb{N},
\end{equation*}
where $
Y_{t}=H_{t}+\int_{0}^{t}\langle f(X_{s-}),dZ_{s}\rangle$, $t\in{\mathbb{R}^{+}%
}$. By \cite[Theorem 36(j)]{ma-ra-sl/13}, for every
$T\in\mathbb{R}^{+}$,
\begin{equation*}
\|\hat{X}^{n}-X\|_{T}\longrightarrow0,~\mathbb{P}\text{--a.s.}
\end{equation*}
On the other hand, similarly to the proof of (\ref{eq4.8}) (using
Lemma \ref{lem4.9}(ii) instead of Lemma \ref{lem4.7}(ii)) one can
show that
\begin{equation*}
\|\hat{X}^{n}-X^n\|_{T}{\xrightarrow [\mathcal{P}]{\;\;\;\;
\;\;\;\;}}0
\end{equation*}
for $T\in\mathbb{R}^{+}$, which completes the proof.
\end{proof}

\section{Stability of MSDEs with maximal monotone operators}

For $n\in\N$ let $Z^n$  be an ${\cal F}^n_t$-adapted
semimartingale.  We will assume that $\{Z^n\}$ satisfies the
following condition  (UT) introduced in Stricker \cite{st}:
\begin{description}
\item[{(UT)}]\index{Condition (UT)} For every  $T\in\Rp$ the
family of random variables
\[
\{\int_{[0,T]} U^n_s\,dZ^n_s;\,n\in\N\,,\,U^n\in\mbox{\bf U}^n_T\}
\]
is bounded in probability. Here $ \mbox{\bf U}^n_T$  is the class
of all discrete predictable processes of the form
$U^n_s=U^n_0+\sum_{i=0}^kU^n_i\mbox{\bf 1}_{\{t_i<s\leq
t_{i+1}\}}$, where  $0=t_0<t_1<\dots<t_k=T$,  $U^n_i$  is ${\cal
F}^n_{t_i}$-measurable and $|U^n_i|\leq 1$  for
$i\in\{0,\dots,k\},n,k\in\N.$
\end{description}

\begin{remark}{\rm \label{rem4.1} A simple characterization of
(UT)  is given in \cite{ms}. To formulate it, let us first recall
that for every $a>0$ the semimartingale $Z^n$ admits decomposition
of the form
\begin{equation}
\label{eq0.3} Z^n=J^{n,a}+M^{n,a}+B^{n,a},
\end{equation}
where $\,J^{n,a}_t=\sum_{0<s\leq t}\Delta Z^n_s\mbox{\bf
1}_{\{|\Delta Z^n_s|>a\,\}}\,,\,M^{n,a}$\glossary{$|x|$}  is a
locally square integrable martingale\index{martingale}  with
$M^{n,a}_0=0$  and  $B^{n,a}$  is a predictable
process\index{predictable process}  of bounded
variation\index{process  of bounded variation} with
$\,B^{n,a}_0=0$. Theorem 1.4. in \cite{ms} asserts that $\{Z^n\}$
satisfies  the condition  (UT) if and only if for some $a>0$ and
for every  $T\in\Rp$  the families of random variables
$\{\updownarrow{\hspace{-0.1cm}J^{n,a}\hspace{-0.1cm}}\updownarrow_{T}\}\,$,
$\,\{\updownarrow{\hspace{-0.1cm}B^{n,a}\hspace{-0.1cm}}\updownarrow_{T}\}\,$,
$\,\{[M^{n,a}]_T\}$  are bounded in probability. }
\end{remark}
The condition (UT)  proved to be very useful in the theory of
limit theorems for stochastic integrals  and for solutions of SDEs
(see, e.g., \cite{ja-me-pa/89,kp,ms,s1,sl/93}).
\begin{lemma}
\label{lem4.1} Assume (H1), (H2) and  that $\{Y^n\}$ is a sequences of
$({\cal F}^n_t)$-adapted processes of the form
\[
Y^n=H^n+Z^n,\quad n\in\N,
\]
where $\{H^n\}$ is a tight in $\D$ sequence of $({\cal
F}^n_t)$-adapted processes with $H^n_0\in\ov{D(A)}$ and $\{Z^n\}$
is a sequence of $({\cal F}^n_t)$-adapted semimartingales with
$Z^n_0=0$ satisfying (UT). Let
$\{(X^n,K^n)=\mathcal{SP}(A,\Pi;Y^n)\}$ be a sequence of solutions
of the Skorokhod problem. Then for every $T\in\Rp$ the sequences
$\{\|X^n\|_T\}$  and
$\{\updownarrow{\hspace{-0.1cm}K^n\hspace{-0.1cm}}\updownarrow_{T}\}$
 are bounded in probability.
\end{lemma}
\begin{proof} By Proposition \ref{prop2.11} it suffices  to check
that for every $T\in\Rp$,
\begin{equation}\label{eq4.01}
\{\|Y^n\|_T\}\quad\mbox{\rm is bounded in probability}
\end{equation}
and
\begin{equation}\label{eq4.02}
\{N_{\ro/2}(Y^n,T)\}\quad\mbox{\rm is bounded in probability.}
\end{equation}
Since $\{Z^n\}$ satisfies (UT), it follows from \cite[Theorem 3.4.1]{ja} that  $\{Z^n\}$ is $S$-tight.
 By this and \cite[Theorem 3.3.3]{ja} for every $T\in\Rp$  and $\eta>0$,
\begin{equation}\label{eq4.03}
\{\|Z^n\|_T\},\quad\{N_{\eta}(Z^n,T)\}\quad\mbox{\rm are bounded
in probability.}
\end{equation}
On the other hand, by tightness of $\{H^n\}$  in $\D$,  for every
$T\in\Rp$  and $\eta>0$,
\begin{equation}\label{eq4.04}
\{\|H^n\|_T\}\quad\mbox{\rm is bounded in probability}
\end{equation}
and
\begin{equation}\label{eq4.05}
\lim_{\delta\to0}\sup_nP(\omega'_{\delta}(H^n,T)\geq
\eta)=0.\end{equation} Clearly, (\ref{eq4.03}) and (\ref{eq4.04})
imply (\ref{eq4.01}). In order to check (\ref{eq4.02}) and
complete the proof we will show that (\ref{eq4.05}) implies that
for every $T\in\Rp$  and $\eta>0$ the sequence
$\{N_{\eta}(H^n,T)\}$ is bounded in probability. Let $\epsilon>0$.
By (\ref{eq4.05}) there is $\delta_\epsilon>0$ such that
\begin{equation}\label{eq4.06}
\sup_nP(\omega'_{\delta_\epsilon}(H^n,T)\geq
\eta)\leq\epsilon.
\end{equation}
Observe that if
$\omega'_{\delta_\epsilon}(H^n(\omega),T)<\eta$ for some
$\omega\in\Omega$, then there exists a subdivision $(s_k)$ of
$[0,T]$ such that $0=s_0 < s_1 < \dots <s_m = T$ ,
$\delta_\epsilon \leq s_k - s_{k-1}$ , $k=1, \dots ,m-1$, where
$m=[T/\delta_\epsilon ]+1$, and
$\omega_{H^n(\omega)}([s_{k-1},s_k))<\eta$. Hence, in particular,
$N_\eta(H^n(\omega),T)\leq m$. Consequently, (\ref{eq4.06})
implies that for every $\epsilon>0$ there is
$K_\epsilon=[T/\delta_\epsilon ]+1$ such that
\[
\sup_nP(N_\eta(H^n,T)>K_\epsilon)\leq\epsilon,
\]
which completes the proof of (\ref{eq4.02}).
\end{proof}

\begin{corollary} Assume (H1), (H2).
\label{cor2.8} For  $n,i\in\N$ let $Y^{ni}$  and $\wh Y^{ni}$  be
 processes adapted to filtrations
$({\cal F}_t^{ni})_{t\geq0}$ and $(\wh {\cal F}_t^{ni})_{t\geq0}$, respectively, and let
$(X^{ni},K^{ni})= \mathcal{SP}(A,\Pi;Y^{ni})$,  $(\wh
X^{ni},K^{ni})= \mathcal{SP}(A,\Pi;\wh Y^{ni})$. If
$\{Y^{ni}=H^{ni}+Z^{ni}\}$, $\{\wh Y^{ni}=\wh H^{ni}+\wh Z^{ni}\}$
with $H^{ni}_0,\wh H^{ni}_0\in\ov{ D(A)}$  and $Z^{ni}_0=\wh
Z^{ni}_0=0$, and $\{H^{ni}\}$, $\{\wh H^{ni}\}$ are tight in $\D$,
$\{Z^{ni}\}$, $\{\wh Z^{ni}\}$ satisfy (UT) and
\[
\lim_{i\to\infty}\limsup_{n\to\infty}
P(\sup_{t\leq T}|Y^{ni}_t-\wh Y^{ni}_t|\geq\epsilon)=0,
\quad T\in\Rp,\,\epsilon>0
\]
then
\[
\lim_{i\to\infty}\limsup_{n\to\infty}
P(\sup_{t\leq T}|X^{ni}_t-\wh X^{ni}_t|\geq\epsilon)=0,
\quad T\in\Rp,\,\epsilon>0.
\]
\end{corollary}
\begin{proof}
By Lemma \ref{lem4.1}, for every $T\in\Rp$ the arrays
$\{\updownarrow{\hspace{-0.1cm}K^{ni}\hspace{-0.1cm}}\updownarrow_{T}\}$
and $\{\updownarrow{\hspace{-0.1cm}\wh
K^{ni}\hspace{-0.1cm}}\updownarrow_{T}\}$ are bounded in
probability. Therefore the corollary follows immediately from
Remark \ref{rem2.6}(b).
\end{proof}

\begin{lemma}
\label{lem4.13} Let $\{Y^{n}\}$ be a  sequence of c\`{a}dl\`{a}g
processes such that $
Y_{0}^{n}\in\overline{\mathrm{D}(A)}$, $n\in{\mathbb{N}}$, and let $%
\{(X^{n},K^{n})\}$ be a sequence of solutions of the Skorokhod
problem associated with $\{Y^{n}\}$, i.e.
$(X^n,K^n)=\mathcal{SP}(A,\Pi;Y^n)$, $n\in{\mathbb{N}}$. Then for
any sequences $\{Z^{n}\}$ and $\{H^{n}\}$,
\begin{enumerate}
\item[\rm(i)] if $\displaystyle{
{\big\{}(Y^{n},H^{n},Z^{n}){\big\}}\quad\text{is tight in }{\mathbb{D}}({%
\mathbb{R}^{+}},{\mathbb{R}}^{2d})
}$
then%
\begin{equation*}
{\big\{}(X^{n},Y^{n},H^{n},Z^{n}){\big\}}\quad\text{is tight in }{\mathbb{D}}%
({\mathbb{R}^{+}},{\mathbb{R}}^{4d}).
\end{equation*}
\item[\rm(ii)]  if $\displaystyle{
(Y^{n},H^{n},Z^{n}){\xrightarrow[\mathcal{D}]{\;\;\;\; \;\;\;\;}}(Y,H,Z)\quad%
\text{in }{\mathbb{D}}({\mathbb{R}^{+}},{\mathbb{R}}^{3d})}$
then%
\begin{equation*}
(X^{n},Y^{n},H^{n},Z^{n}){\xrightarrow[\mathcal{D}]{\;\;\;\; \;\;\;\;}}%
(X,Y,H,Z)\quad\text{in }{\mathbb{D}}({\mathbb{R}^{+}},{\mathbb{R}}^{4d}),
\end{equation*}
where $(X,K)$ is a solution of the Skorokhod problem associated
with $Y$.
\end{enumerate}
\end{lemma}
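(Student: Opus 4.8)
The plan is to reduce both parts to the $J_{1}$-continuity of the deterministic Skorokhod map (Theorem~\ref{thm2.14}), combined with Prokhorov's theorem, the Skorokhod representation theorem, and a standard nested-jumps bookkeeping argument for the $J_{1}$ topology. Write $\Gamma(y):=\mathcal{SP}^{(1)}(A,\Pi;y)$. By Theorem~\ref{thm2.14}, part II, this map is well defined on $E_{0}:=\{y\in\mathbb{D}(\mathbb{R}^{+},\mathbb{R}^{d}):y_{0}\in\overline{\mathrm{D}(A)}\}$, a closed subset of $\mathbb{D}(\mathbb{R}^{+},\mathbb{R}^{d})$ because $J_{1}$-convergence forces convergence at $0$ and $\overline{\mathrm{D}(A)}$ is closed; and by Theorem~\ref{thm2.14}, part I, the map $y\mapsto(\Gamma(y),y)$ is continuous from $E_{0}$ into $\mathbb{D}(\mathbb{R}^{+},\mathbb{R}^{2d})$ (part I(ii) gives the joint convergence of $(x^{m},k^{m},y^{m})$, and projecting onto the $(x,y)$-coordinates is $J_{1}$-continuous). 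I would also record, for $X^{n}=\Gamma(Y^{n})$ and $K^{n}=Y^{n}-X^{n}$, that the jump rule of Definition~\ref{def2.5} together with $\Pi|_{\overline{\mathrm{D}(A)}}=\mathrm{id}$ and the non-expansiveness of $\Pi$ yields $\Delta X^{n}_{t}=\Pi(X^{n}_{t-}+\Delta Y^{n}_{t})-\Pi(X^{n}_{t-})$, whence $|\Delta X^{n}_{t}|\le|\Delta Y^{n}_{t}|$ and $\mathrm{disc}(X^{n})\subseteq\mathrm{disc}(Y^{n})$ for every $n$ (cf.\ Remark~\ref{rem2.6}).

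For $(i)$, since tightness in $\mathbb{D}(\mathbb{R}^{+},\mathbb{R}^{4d})$ is equivalent to relative compactness, it suffices to show that every subsequence of $\{(X^{n},Y^{n},H^{n},Z^{n})\}$ has a further subsequence converging in law. I would fix a subsequence; along it $\{(Y^{n},H^{n},Z^{n})\}$ is tight, so by Prokhorov's theorem some sub-subsequence (not relabelled) converges in law to a process $(\bar Y,\bar H,\bar Z)$, and by the Skorokhod representation theorem I may realize $(Y^{n},H^{n},Z^{n})\to(\bar Y,\bar H,\bar Z)$ $\mathbb{P}$-a.s.\ in $\mathbb{D}(\mathbb{R}^{+},\mathbb{R}^{3d})$ on a suitable probability space, the accompanying $X^{n}=\Gamma(Y^{n})$ retaining their joint law since $\Gamma$ is Borel. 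Then $\bar Y_{0}\in\overline{\mathrm{D}(A)}$, so $\bar X:=\Gamma(\bar Y)$ is well defined and $(X^{n},Y^{n})\to(\bar X,\bar Y)$ $\mathbb{P}$-a.s.\ in $\mathbb{D}(\mathbb{R}^{+},\mathbb{R}^{2d})$ by the continuity above. It remains to upgrade the two a.s.\ pairwise convergences $(X^{n},Y^{n})\to(\bar X,\bar Y)$ and $(Y^{n},H^{n},Z^{n})\to(\bar Y,\bar H,\bar Z)$ to the joint convergence $(X^{n},Y^{n},H^{n},Z^{n})\to(\bar X,\bar Y,\bar H,\bar Z)$. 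Here the inclusion $\mathrm{disc}(X^{n})\subseteq\mathrm{disc}(Y^{n})$ is decisive: a $J_{1}$-time change aligning the jumps of $(Y^{n},H^{n},Z^{n})$ with those of the limit automatically aligns the jumps of $X^{n}$, which occur at jumps of $Y^{n}$ and are no larger than $|\Delta Y^{n}|$, while away from those times $X^{n}$ is asymptotically continuous; this is the customary nested-jumps gluing for $J_{1}$ (cf.\ \cite{ja-sh/87}, \cite{et-ku/86}, and the analogous Skorokhod-map arguments in \cite{sl-wo/10}). Hence $(X^{n},Y^{n},H^{n},Z^{n})$ converges in law along the sub-subsequence, proving tightness.

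For $(ii)$, by Prokhorov's theorem $\{(Y^{n},H^{n},Z^{n})\}$ is tight, hence by $(i)$ so is $\{(X^{n},Y^{n},H^{n},Z^{n})\}$; let $\mu$ be the law of an arbitrary sub-subsequential limit. Running the argument of $(i)$ along the relevant sub-subsequence identifies $\mu$ with the law of $(\Gamma(\bar Y),\bar Y,\bar H,\bar Z)$ for some $(\bar Y,\bar H,\bar Z)$ with $(\bar Y,\bar H,\bar Z)\stackrel{d}{=}(Y,H,Z)$ (this last equality holds because that sub-subsequence of $(Y^{n},H^{n},Z^{n})$ converges in law both to $(\bar Y,\bar H,\bar Z)$ and, by hypothesis, to $(Y,H,Z)$). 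Since $(y,h,z)\mapsto(\Gamma(y),y,h,z)$ is Borel, $\mu$ is the image of $\mathcal{L}(Y,H,Z)$ under this map, i.e.\ $\mu=\mathcal{L}(X,Y,H,Z)$ with $X=\Gamma(Y)$ and $K:=Y-X$, so that $(X,K)=\mathcal{SP}(A,\Pi;Y)$. As every sub-subsequential limit carries this same law, the whole sequence converges in law to $(X,Y,H,Z)$. The single delicate step is the nested-jumps $J_{1}$-gluing in $(i)$; should one wish to avoid it, tightness can be obtained directly from the deterministic a priori estimate for the Skorokhod map (of the type proved in \cite{ma-ra-sl/13}, bounding $\|X^{n}\|_{T}$ by a function of $\|Y^{n}\|_{T}$) together with a uniform $J_{1}$-modulus bound for $\Gamma$ between successive large jumps of $Y^{n}$, again resting on $|\Delta X^{n}_{t}|\le|\Delta Y^{n}_{t}|$ and Theorem~\ref{thm2.14}, part I.
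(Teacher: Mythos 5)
Your proposal is correct and follows essentially the same route as the paper, whose entire proof is the one-line instruction to combine the deterministic convergence results for the Skorokhod map from \cite{ma-ra-sl/13} (recalled here as Theorem \ref{thm2.14}) with the Skorokhod representation theorem; you have simply filled in the Prokhorov/subsequence bookkeeping, the Borel-measurability of $\Gamma$ needed to transport laws, and the nested-jumps $J_{1}$-gluing that the paper leaves implicit in its citation of the deterministic theorem.
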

\begin{proof}
It suffices to combine the deterministic results given in Remark
\ref{rem2.6}(d) with the Skorokhod representation theorem.
\end{proof}
\medskip

We say that MSDE (\ref{eq1.1}) has a weak solution if there exists a
 space $(\widehat{\Omega},\widehat{\mathcal{F}},\,(\widehat {%
(\mathcal{F}}_{t})_{t\geq0}),\widehat{\mathbb{P}})$ and $\widehat{\mathcal{F}}_{t}$%
-adapted processes $\widehat{H}$,$\widehat{Z}$ and $(\widehat{X},\widehat {K%
})$ such that $\mathcal{L}(\widehat{H},\widehat{Z})=\mathcal{L}(H,Z)$ and $(%
\widehat{X},\widehat{K})$ is a solution of the Skorokhod problem
associated with
\begin{equation*}
\widehat{Y}_{t}=\widehat{H}_{t}+\int_{0}^{t}f(\widehat{X}_{s-})\,d\widehat {Z%
}_{s},\quad t\in{\mathbb{R}^{+}}.
\end{equation*}
If ${\cal L}(\wh X,\wh K)={\cal L}(\wh X',\wh K')$ for any two
weak solutions $(\wh X,\wh K)$,  $(\wh X',\wh K')$ of the MSDE
(\ref{eq1.1}),   possibly defined on two different probability
spaces, then we say that the weak uniqueness for the MSDE
(\ref{eq1.1}) holds.

Let $\{ H^n \}$ be a sequence of $({\cal F}^n_t)$-adapted processes
such that $H^n_0\in \bar{D}$, $n \in \N$, and let $\{Z^n\}$ be a
sequence of $({\cal F}^n_t)$-adapted semimartingales satisfying (UT)
and such that $Z^n_0=0$, $n \in \N$. We consider the following the
sequence of $d$-dimensional MSDEs driven by the operator $A$ and
associated to
the non-expanding projection $\Pi$:%
\begin{equation}
X^n_{t}+K^n_{t}=H^n_{t}+\int_{0}^{t}\langle
f^n(X^n_{s-}),dZ^n_{s}\rangle,\quad t\in%
\mathbb{R}^{+},   \label{eq4.07}
\end{equation}
where
$f^n:\mathbb{R}^{d}\rightarrow\mathbb{R}^{d}\otimes\mathbb{R}^{d}$
is a continuous function. We  will need the following hypothesis.

\begin{description}
\item[(H5)]
   $f^n:{\mathbb{R}}^{d}\rightarrow
\mathbb{R}^{d}\otimes{{\mathbb{R}^{d}}}$ satisfies (H3)  for every
$n\in\N$ and there exists  $f:{\mathbb{R}}^{d}\rightarrow
\mathbb{R}^{d}\otimes{{\mathbb{R}^{d}}}$  such that $ \sup_{x\in
K}\|f^n(x)-f(x)\|\lra0 $ for every compact subset $K \subset \Rd
$.
\end{description}

We can now formulate our main stability result.

\begin{theorem}
\label{tw3} Assume (H1), (H2) and (H5). Let $\{ H^n \}$ be a
sequence  of $({\cal F}^n_t)$-adapted processes such that $H^n_0\in
\ov{D(A)}$, $n \in \N$, and let $\{Z^n\}$ be a sequence of  $({\cal
F}^n_t)$-adapted semimartingales satisfying (UT) and such that
$Z^n_0=0$, $n \in \N$. Let $\{(X^n,K^n)\}$ be a sequence of
solutions of the  MSDE (\ref{eq4.07}). If $(H^n,Z^n)\arrowd (H,Z)$
in $\Dii$ then
\begin{enumerate}
\item[{\rm(i)}] $\{(X^n,K^n, H^n, Z^n)\}$ is
tight in $ \Diiii \;$ and its every limit point is a weak solution
of the MSDE (\ref{eq1.1}),
\item[{\rm(ii)}]
if moreover (\ref{eq1.1}) has a unique weak solution $(X,K)$ then
\[
(X^n,K^n)\arrowd (X,K)\quad\mbox{\rm  in}\,\,\Dii.
\]
\end{enumerate}
\end{theorem}
\begin{proof} We follow the proof of   \cite[Theorem 4]{sl/93}.
First we show that for every  $T\in\Rp$,
\begin{equation}
\label{eq3.6a} \{\|X^n\|_T\}\quad\mbox{\rm is bounded in
probability}.
\end{equation}
Let $\wh X^n$ denote  the solution of  (\ref{eq4.07}) with
$f^n=0$, $n\in\N$. Since $\{ H^n \}$ is tight in $\D$, it follows
from Lemma \ref{lem4.1} that $ \{\|\wh{X}^n\|_T\}$ is bounded in
probability for every $T\in\Rp$. On the other hand, since
$\{Z^n\}$ satisfies  (UT), we may and will assume that
$Z^n_t=M^n_t+V^n_t$, $M^n_0=V^n_0=0$, where $\{[M^n]_T\}$,
$\{\updownarrow{\hspace{-0.1cm}
V^{n}\hspace{-0.1cm}}\updownarrow_{T}\}$ are bounded in
probability  and $|\Delta M^n|\leq 4c$, $|\Delta V^n|\leq c$ for some $c>0$. In this case
$\{\langle M^n\rangle_T\}$ is bounded in probability, as well.
Define $ \beta_k^n=\inf\{t; |\wh X^n_t|\vee
\updownarrow{\hspace{-0.1cm}V^{n}\hspace{-0.1cm}}\updownarrow_{t}
\vee\,[M^{n}]_t\vee\, \langle M^{n}\rangle _t>k\}\wedge k, $
$n,k\in\N$. It is clear that
\begin{equation}
\label{eq4.15}
\lim_{k\rightarrow+\infty}\limsup_{n\rightarrow+\infty}P(\beta^n_k\leq
T)=0,\quad T\in\Rp.
\end{equation}
Arguing as in Step 2 of the proof of Theorem \ref{thm4.5} we show
that $
\mathbb{E}\sup_{t<\beta^n_{k}}|X^n_{t}-\wh{X}^n_{t}|^{2}\leq
C(k,L)\exp \{3k\;C(k,L)\}$ for $n,k\in\N$, which together with
(\ref{eq4.15}) implies (\ref{eq3.6a}). Combining (\ref{eq3.6a})
with  (H5)  shows that  $\{ \sup_{t \leq T }\|f^n(X^n_{t-})\| \}$
is also bounded in probability for $T\in\Rp$. Therefore the
sequence of stochastic integrals $\{\int_0^{\cdot}\langle
f^n(X^n_{s-}) , dZ^n_s \rangle \}$ satisfies (UT).

Using  arguments from the proof  \cite[Theorem 4]{sl/93}  for
every $i\in\N$ one  can construct a sequence  $\{H^{ni}\}$ of
${\cal F}^n_t$-adapted processes such that for every $T\in\Rp$ the
sequence
$\{\updownarrow{\hspace{-0.1cm}H^{ni}\hspace{-0.1cm}}\updownarrow_{T}\}$
is bounded in probability and
\begin{equation} \label{ni} \lim_{i \rightarrow
\infty }\limsup_{n \rightarrow \infty }
 \p(\sup_{t \leq T}|H^{ni}_t-H^n_t| \geq \epsilon )=0, \quad \epsilon >0,\; T \in \Rp.
 \end{equation}
If  $ (X^{ni},K^{ni})=\mathcal{SP}(A,\Pi; H^{ni} +
\int_0^{\cdot}\langle f(X^n_{s-}) , dZ^n_s \rangle )$, $n,i\in\N$,
then for every $i\in\N$ the sequence $ \{X^{ni}\}$ satisfies  (UT)
(as a sum of three sequences satisfying (UT)) and by (\ref{ni})
and Corollary \ref{cor2.8},
 \[ \lim_{i
\rightarrow \infty }\limsup_{n \rightarrow \infty }
 P(\sup_{t \leq T}|X^{ni}_t-X^n_t| \geq \epsilon )=0,\quad \epsilon >0,\; T\in\Rp.\]  Furthermore,  it is well
known that for continuous $f: \Rd \lra \Rd \otimes \Rd $ one can
construct  a sequence of functions $\{ g^i \}$such that $g^i \in
\cdw $, $i\in\N$  and  $\sup_{x \in {K}}||g^i(x) - f(x) || \lra 0$
 for any compact subset $K \subset \Rd $. Set
\[
Y^{ni} =  H^{n}+ \int_0^{\cdot}\langle g^i(X^{ni}_{s-}) , dZ^n_s
\rangle , \qquad Y^{n} = H^{n}+ \int_0^{\cdot}\langle
f^n(X^{n}_{s-}) , dZ^n_s \rangle, \quad n,i \in \N.
\]
Since $\lim_{i\to\infty}\lim_{n\to\infty}\sup_{x \in {K}}\|g^i(x)
- f^n(x) \|= 0$ for any compact subset $K \subset \Rd $  and
$\{Z^n\}$ satisfies (UT), it is clear that
\begin{equation}\label{n4}
\lim_{i \rightarrow \infty }\limsup_{n \rightarrow \infty
} P(\sup_{t \leq T}|Y^{ni}_t-Y^n_t| \geq \epsilon )=0, \quad\epsilon
 > 0,\quad T\in\Rp.
\end{equation}
Fix $i\in\N$. From the fact that $g^i \in \cdw $  and $\{X^{ni}\}$ satisfies
(UT) we deduce that $\{ g^i(X^{ni}) \}$  satisfies (UT) as well. By \cite[Lemma 4]{sl/93} the sequence $\{ (Y^{ni},H^n,Z^n) \}$  is  tight in
$\Diii$, and hence, by (\ref{n4}),  $\{ (Y^n,H^n,Z^n) \}$ is tight
in $\DDD$. Since $(X^n,K^n)=\mathcal{SP}(A,\Pi; Y^n)$, it follows
from Lemma \ref{lem4.13} that
\begin{equation}
\label{RZZ} \{ (X^n,K^n,H^n,Z^n) \} \mbox{ is tight in  }\Diiii.
\end{equation}
The rest of the  proof of (i) runs as  the proof of \cite[Theorem 4]{sl/93}. Part (ii) follows immediately from (i).
\end{proof}

\begin{corollary}
\label{thm4.14}Under the assumptions (H1)--(H3) there exists a
weak solution $(X,K)$ of the MSDE (\ref{eq1.1}).
\end{corollary}

\begin{proof}
Let $\{f^{n}\}$ be a sequence of functions satisfying (H5) and
such that such that $f^{n}\in\mathcal{C}^{2}$, $n\in\N$.  By
Theorem \ref{thm4.5} for every $n\in{\mathbb{N}}$ there exists a
unique strong solution of the equation
\begin{equation*}
X_{t}^{n}+K_{t}^{n}=H_{t} +\int_{0}^{t}\langle
f^{n}(X_{s-}^{n}),\,dZ_{s}\rangle, \quad
t\in{\mathbb{R}^{+}},\,n\in\N,
\end{equation*}
so the desire result follows from Theorem  \ref{tw3}(i).
\end{proof}
\medskip

In the rest of this section we consider the convergence in
probability of solutions of  MSDEs. We will assume that the limit
MSDE (\ref{eq1.1}) has the pathwise uniqueness property, i.e.  for
any two  solutions $(\wh X,\wh K)$, $(\wh X',\wh K')$ of the  MSDE (\ref{eq1.1})
corresponding to processes $(\wh H,\wh Z)$, $(\wh H',\wh Z')$ and
defined on some probability space $\spawh$ with filtration
$\filtwh$, if $\wh P((\wh H_t,\wh Z_t)=(\wh H'_t,\wh
Z'_t);t\in\Rp)=1$ then $\wh P((\wh X_t,\wh K_t)=(\wh X'_t,\wh K'_t);t\in\Rp)=1$. Note
that using arguments  from the proof of Theorem \ref{thm4.5} one
can show that (H1)--(H4) imply pathwise uniqueness for (\ref{eq1.1}).

\begin{corollary} Assume that $(H^n,Z^n)\arrowp (H,Z)$ in $\DD$ and (\ref{eq1.1}) has
the pathwise uniqueness property. Then under  the assumptions of  Theorem \ref{tw3}
\begin{enumerate}
\item[{\rm(i)}]$\displaystyle{ (X^n,K^n,H^n,Z^n) \arrowp
(X,K,H,Z)}$ in $\Diiii$,
\item[{\rm(ii)}] if  $\sup_{t\leq T}|H^n_t-H_t|\arrowp0,\,\sup_{t\leq
T}|Z^n_t-Z_t|\arrowp0,\,T\in\Rp$   then
\[
\sup_{t\leq T}|X^n_t-X_t|\arrowp0\quad and\,\,\sup_{t\leq
T}|K^n_t-K_t|\arrowp0,\quad T\in\Rp,
\]
\end{enumerate}
where $(X,K)$ is a unique  strong solution of the MSDE (\ref{eq1.1}).\label{cor4.3}
\end{corollary}
\begin{proof} (i) It suffices to use   Theorem \ref{tw3}
and repeat arguments from the proof of  \cite[Corollary 11]{sl/93}.

(ii) By part (i), $(X^n,K^n,H^n,Z^n) \arrowp (X,K,H,Z) $ in
$\Diiii$. Moreover,
$\Delta X_t+\Delta K_t=\Delta H_t+f(X_{t-})\Delta Z_t$
and if $\Delta X_t\neq0$  or $\Delta K_t\neq0$  then  $\Delta
H_t\neq0$  or  $\Delta Z_t\neq0$. Therefore applying
\cite[Corollary C]{s1} gives (ii).
\end{proof}

\section*{Acknowledgments}

The work of L.M. and A.R.  was supported by the project ERC-like, code
1ERC/02.07.2012. The work of L.S. was supported by Polish NCN grant no.  2012/07/B/ST1/03508.
We are very grateful to the anonymous referee for his/her comments.

\end{document}